\def\N{{\mathbb{N}}}
\def\R{{\mathbb{R}}}
\begin{document}
\title[Pontryagin]{Discrete time Pontryagin principles in Banach spaces}

\author[Bachir and Blot]
{Mohammed Bachir and Jo${\rm \ddot e}$l Blot} 

\address{Mohammmed Bachir: Laboratoire SAMM EA4543,\newline
Universit\'{e} Paris 1 Panth\'{e}on-Sorbonne, centre P.M.F.,\newline
90 rue de Tolbiac, 75634 Paris cedex 13,
France.}
\email{Mohammed.Bachir@univ-paris1.fr}

\address{Jo\"{e}l Blot: Laboratoire SAMM EA4543,\newline
Universit\'{e} Paris 1 Panth\'{e}on-Sorbonne, centre P.M.F.,\newline
90 rue de Tolbiac, 75634 Paris cedex 13,
France.}
\email{blot@univ-paris1.fr}
\date{May 15th 2017}

\numberwithin{equation}{section}
\newtheorem{theorem}{Theorem}[section]
\newtheorem{lemma}[theorem]{Lemma}
\newtheorem{example}[theorem]{Example}
\newtheorem{remark}[theorem]{Remark}
\newtheorem{definition}[theorem]{Definition}
\newtheorem{proposition}[theorem]{Proposition}
\newtheorem{corollary}[theorem]{Corollary}
\begin{abstract}
The aim of this paper is to establish Pontryagin's principles in a dicrete-time infinite-horizon setting when the state variables and the control variables belong to infinite dimensional Banach spaces. In comparison with previous results on this question, we delete conditions of finiteness of codimension of subspaces. To realize this aim, the main idea is the introduction of new recursive assumptions and useful consequences of the Baire category theorem and of the Banach isomorphism theorem.
\end{abstract}
\maketitle
\vskip1mm
\noindent
Key Words: Pontryagin principle, discrete time, infinite horizon, difference equation,  Banach spaces.\\
M.S.C. 2000: 49J21, 65K05, 39A99.
\section{Introduction}
The considered infinite-horizon Optimal Control problems are governed by the following discrete-time controlled dynamical system.
\begin{equation}\label{eq11}
x_{t+1} = f_t(x_t,u_t), \hskip3mm t \in \N
\end{equation}
where $x_t \in X_t \subset X$, $u_t \in U_t \subset U$ and $f_t : X_t \times U_t \rightarrow X_{t+1}$. Here $X$ and $U$ are real Banach spaces; $X_t$ is a nonempty open subset of $X$ and $U_t$ is a nonempty subset of $U$. As usual, the $x_t$ are called the state variables and the $u_t$ are called the control variables.
\vskip1mm
From an initial state $\sigma \in X_0$, we denote by $Adm(\sigma)$ the set of the processes $((x_t)_{t \in \N}, (u_t)_{t \in \N}) \in (\prod_{t \in \N} X_t) \times (\prod_{t \in \N} U_t)$ which satisfy (\ref{eq11}) for all $t \in \N$. The elements of $Adm(\sigma)$ are called the admissible processes.
\vskip1mm
For all $t \in \N$, we consider the function $\phi_t : X_t \times U_t \rightarrow \R$ to define the criteria.We denote by $Dom(J)$ the set of the $((x_t)_{t \in \N}, (u_t)_{t \in \N}) \in (\prod_{t \in \N} X_t) \times (\prod_{t \in \N} U_t)$ such that the series $\sum_{t=0}^{+ \infty} \phi_t(x_t,u_t)$ is convergent in $\R$. We define the nonlinear functional $J : Dom(J) \rightarrow \R$ by setting
\begin{equation}\label{12}
J((x_t)_{t \in \N}, (u_t)_{t \in \N}) := \sum_{t=0}^{+ \infty} \phi_t(x_t,u_t).
\end{equation}
Now we can give the list of the considered problems of Optimal Control.
\vskip2mm
\noindent
$({\bf P}_1(\sigma))$: Find $((\hat{x}_t)_{t \in \N}, (\hat{u}_t)_{t \in \N}) \in Dom(J) \cap Adm(\sigma)$ such that $J((\hat{x}_t)_{t \in \N}, (\hat{u}_t)_{t \in \N}) \geq J((x_t)_{t \in \N}, (u_t)_{t \in \N})$ for all 
$((x_t)_{t \in \N}, (u_t)_{t \in \N}) \in Dom(J) \cap Adm(\sigma)$.
\vskip2mm
\noindent
$({\bf P}_2(\sigma))$: Find $((\hat{x}_t)_{t \in \N}, (\hat{u}_t)_{t \in \N}) \in Adm(\sigma)$ such that \\
 $\limsup_{h \rightarrow + \infty} \sum_{t=0}^h (\phi_t(\hat{x}_t, \hat{u}_t) - \phi(x_t,u_t)) \geq 0$ for all 
$((x_t)_{t \in \N}, (u_t)_{t \in \N}) \in Adm(\sigma)$.
\vskip2mm
\noindent
$({\bf P}_3(\sigma))$: Find $((\hat{x}_t)_{t \in \N}, (\hat{u}_t)_{t \in \N}) \in Amd(\sigma)$ such that \\
 $\liminf_{h \rightarrow + \infty} \sum_{t=0}^h (\phi_t(\hat{x}_t, \hat{u}_t) - \phi(x_t,u_t)) \geq 0$ for all 
$((x_t)_{t \in \N}, (u_t)_{t \in \N}) \in  Adm(\sigma)$.
\vskip1mm
These problems are classical in mathematical macroeconomic theory; cf. \cite{Mi}, \cite{BH}, \cite{Zas}, \cite{SLP} and references therein, and also in sustainable development theory, \cite{Cl}.
\vskip1mm
We study the necessary optimality conditions for these problems in the form of Pontryagin principles. Among the different ways to treat such a question, we choose the method of the reduction to the finite horizon. This method comes from \cite{BCh} in the discrete-time framework. Notice that this viewpoint was previously used by Halkin (\cite{CHL}, Theorem 2.3, p. 20) in the continuous-time framework.
\vskip1mm
There exist several works on this method when $X$ and $U$ are finite dimensional, cf. \cite{BH}. In the present paper we treat the case where $X$ and $U$ are infinite dimensional Banach spaces. With respect to two previous papers on this question, \cite{BB1} and \cite{BB2}, the main novelty is to avoid the use of assumptions of finiteness of the codimension of certain vector subspaces. To realize this we introduce new recursive assumptions on the partial differentials of the $f_t$ of (\ref{eq11}). We speak of recursive assumptions since they contain two successive dates $t-1$ and $t$.
\vskip1mm
To make more easy the reading of the paper we describe the schedule of the proof of the main theorem (Theorem \ref{th21} below) .\\
\underline{First step}: the method of the reduction to finite horizon associates to the considered problems in infinite horizon the same sequence of finite-horizon problems which is indexed by $h \in \N$, $h \geq 2$.\\
\underline{Second step}: the providing of conditions to ensure that we can use Multiplier Rules (in Banach spaces) on the finite-horizon problems. Hence we obtain, for each $h \in \N$, $h \geq 2$, a nonzero list $(\lambda_0^h, p_1^h, ..., p_{h+1}^h) \in \R \times (X^*)^{h+1}$ where $\lambda_0^h$ is a multiplier associated to the criterion and $(p_1^h, ..., p_{h+1}^h)$ are multipliers associated to the (truncated) dynamical system which is transformed into a list of constraints.\\
\underline{Third step}: the building of an increasing function $\varphi : \N \rightarrow \N$ such that the subsequences $(\lambda_0^{\varphi(h)})_{h}$ and $(p_{t+1}^{\varphi(h)})_h$ respectively converge to $\lambda_0$ and $p_{t+1}$ for each $t \in \N_*$, with $(\lambda_0, (p_{t+1})_t)$ nonzero. The Banach-Alaoglu theorem permits us to obtain weak-star convergent subsequences of $(\lambda_0^h)_h$ and  $(p_{t+1}^h)_h$ for each $t \in \N$, and a diagonal process of Cantor permits us to obtain the same function $\varphi$ for all $t \in \N$. The main difficulty is to avoid that $(\lambda_0, (p_{t+1})_t)$ is equal to zero. Such a difficulty is due to the infinite dimension where the weak-star closure of a sphere centered at zero contains zero. To overcome this difficulty, using the Baire category theorem, we establish that a weak-star convergence implies a norm convergence on a well chosen Banach subspace of the dual space of the state space.
\vskip2mm
Now we describe the contents of the paper. In Section 2 we present our assumptions and we give the statement of the main theorem on the Pontryagin principle. In Section 3 we recall a characterization of the closedness of the image of a linear continuous operator, a consequence of the Baire category theorem on the weak-star convergence, and we provide a diagonal process of Cantor for the weak-star convergence. In Section 4 we describe the reduction to the finite horizon and we establish consequence of our recursive assumptions on the surjectivity and on the closedness of the range of the differentials of the constraints in the finite-horizon problems. In Section 5 we give the complete proof of our main theorem.
\section{The main result}
First we present a list of hypotheses.
\vskip2mm
\noindent
{\bf (H1)}: $X$ and $U$ are separable Banach spaces.
\vskip2mm
\noindent
{\bf (H2)}: For all $t \in \N$, $X_t $ is a nonempty open subset of $X$ and $U_t$ is a nonempty convex subset of $U$.
\vskip2mm
\noindent
When $((\hat{x}_t)_{t \in \N}, (\hat{u}_t)_{t \in \N})$ is a given admissible process of one of the problems ($({\bf P}_i(\sigma))$), $i \in \{1,2,3 \}$, we consider the following conditions.
\vskip2mm
\noindent
{\bf (H3)}: For all $t \in \N$, $\phi_t$ is Fr\'echet differentiable at $(\hat{x}_t, \hat{u}_t)$ and $f_t$ is continuously Fr\'echet differentiable at $(\hat{x}_t, \hat{u}_t)$.
\vskip2mm
\noindent
{\bf (H4)}:  For all $t \in \N$, $t \geq 2$, \\
$D_1f_t(\hat{x}_t, \hat{u}_t) \circ D_2 f_{t-1}(\hat{x}_{t-1}, \hat{u}_{t-1})(U) + D_2f_t(\hat{x}_t, \hat{u}_t)(T_{U_t}(\hat{u}_t)) = X$.
\vskip2mm
\noindent
{\bf (H5)}: $D_1f_1(\hat{x}_1, \hat{u}_1) \circ D_2 f_0( \hat{x}_0, \hat{u}_0)( T_{U_0}(\hat{u}_0)) + D_2f_1(\hat{x}_1, \hat{u}_1)(T_{U_1}(\hat{u}_1)) = X$.
\vskip2mm
\noindent
{\bf (H6)}: $ri( T_{U_0}(\hat{u}_0)) \neq \emptyset$ and $ri(T_{U_1}(\hat{u}_1)) \neq \emptyset$.
\vskip2mm
In (H3), since $U_t$ is not necessarily a neighborhood of $\hat{u}_t$, the meaning of this condition is that there exists an open neighborhood $V_t$ of $(\hat{x}_t, \hat{u}_t)$ in $X \times U$ and a Fr\'echet differentiable function (respectively continuously Fr\'echet differentiable mapping) $\tilde{\phi}_t : V_t \rightarrow \R$ (respectively $\tilde{f}_t : V_t \rightarrow X$) such that ${\tilde{\phi}_t}$ and ${\phi_t}$ (respectively ${\tilde{f}_t}$ and ${f_t}$) coincide on $V_t \cap (X_t \times U_t)$. Moreover $D_1$ and $D_2$ denotes the partial Fr\'echet differentials with respect to the first (vector) variable and with respect to the second (vector) variable respectively. About (H4), (H5) and (H6), when $A$ is a convex subset of $U$, $\hat{u} \in A$, the set $T_{A}(\hat{u})$  is the closure of $\R_+(A - \hat{u})$; it is  called the tangent cone of $A$ at $\hat{u}$ as it is usually defined in Convex Analysis, \cite{AE} p. 166. About (H6), if ${\rm aff}(T_{U_t}(\hat{u}_t))$ denotes the affine hull of $T_{U_t}(\hat{u}_t)$, $ri(T_{U_t}(\hat{u}_t))$ denotes the (relative) interior of $T_{U_t}(\hat{u}_t)$ in  ${\rm aff}(T_{U_t}(\hat{u}_t))$. Such definition of the relative interior of a convex is given in \cite{Zal}, p. 14-15, where it is denoted by $rint$.
\vskip2mm
Now we state the main result of the paper.
\vskip2mm
\begin{theorem}\label{th21} Let $((\hat{x}_t)_{t \in \N}, (\hat{u}_t)_{t \in \N})$ be an optimal process for one of the problems $({\bf P}_i(\sigma))$, $i \in \{1,2,3 \}$. Under (H1-H6), there exist $\lambda_0 \in \R$ and $(p_{t+1})_{t \in \N} \in (X^*)^{\N}$ which satisfy the following conditions.
\begin{enumerate}
\item $(\lambda_0, p_1,p_2) \neq (0,0,0)$.
\item $\lambda_0 \geq 0$.
\item $p_t = p_{t+1} \circ D_1f_t(\hat{x}_t, \hat{u}_t) + \lambda_0 D_1 \phi_t(\hat{x}_t, \hat{u}_t)$, for all $t \in \N$, $t \geq 1$.
\item $\langle \lambda_0 D_2\phi_t(\hat{x}_t, \hat{u}_t) + p_{t+1} \circ D_2 f_t(\hat{x}_t, \hat{u}_t), u_t - \hat{u}_t \rangle \leq 0$, for all $u_t \in U_t$, for all $t \in \N$.
\end{enumerate}
\end{theorem}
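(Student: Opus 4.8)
The plan follows the three-step schedule sketched in the introduction. First I would carry out the reduction to finite horizon: fix $h \geq 2$ and consider the truncated problem of maximizing $\sum_{t=0}^{h} \phi_t(x_t,u_t)$ subject to the dynamical constraints $x_{t+1} = f_t(x_t,u_t)$ for $0 \leq t \leq h$, with the initial condition $x_0 = \sigma$ and a terminal condition fixing $x_{h+1} = \hat{x}_{h+1}$. The key point, which relies on how $({\bf P}_i(\sigma))$ is set up, is that the given optimal process $((\hat{x}_t), (\hat{u}_t))$ remains optimal for each of these finite-horizon problems; for $({\bf P}_2)$ and $({\bf P}_3)$ this uses the $\limsup$/$\liminf$ inequalities restricted to competitors that agree with the optimal process after time $h$, while for $({\bf P}_1)$ it uses convergence of the series. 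This is the content announced for Section~4.

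The second step is to apply a Multiplier Rule (a Karush--Kuhn--Tucker / Lagrange theorem in Banach spaces, with the convex constraint sets $U_t$ handled via the tangent cones $T_{U_t}(\hat{u}_t)$) to each finite-horizon problem. The constraints $g_t := x_{t+1} - f_t(x_t,u_t) = 0$ define a mapping whose differential at the optimal point must have closed range (indeed be surjective modulo the control directions) for the multiplier rule to yield a nontrivial, well-behaved multiplier. This is exactly where hypotheses (H4) and (H5) enter: they are engineered so that the range of the differential of the constraint mapping is all of $X$ at each stage, and the results of Section~4 (on surjectivity and closedness of the range) turn the recursive conditions into the hypotheses of the multiplier rule. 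The output is, for each $h$, a nonzero list $(\lambda_0^h, p_1^h, \dots, p_{h+1}^h) \in \R \times (X^*)^{h+1}$ satisfying the adjoint equation (3) and the variational inequality (4) at the finite level, together with $\lambda_0^h \geq 0$ after a suitable sign normalization; (H6), via the relative-interior condition and a convex separation argument, is what secures the correct sign and the nontriviality of the low-index multipliers $(\lambda_0^h, p_1^h, p_2^h)$.

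The third step is the passage to the limit $h \to +\infty$. After normalizing the lists (e.g. so that $\|(\lambda_0^h, p_1^h, p_2^h)\| = 1$), I would invoke the Banach--Alaoglu theorem to extract, for each fixed index $t$, a weak-star convergent subsequence of $(p_{t+1}^h)_h$ in $X^*$ and of $(\lambda_0^h)_h$ in $\R$, and then apply the Cantor diagonal process recalled in Section~3 to produce a single increasing $\varphi : \N \to \N$ working simultaneously for all $t$. Passing to the weak-star limit in the adjoint equation (3) and in the variational inequality (4) — both of which survive because $D_1 f_t$, $D_2 f_t$, $D_1\phi_t$, $D_2\phi_t$ are fixed continuous operators and the pairings are weak-star continuous in the relevant argument — yields conditions (2), (3), (4) of the theorem in the limit.

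The main obstacle, as the authors themselves flag, is proving nontriviality, i.e. condition (1): $(\lambda_0, p_1, p_2) \neq (0,0,0)$. The normalization $\|(\lambda_0^h, p_1^h, p_2^h)\| = 1$ does not survive passage to the weak-star limit, since in infinite dimensions the weak-star closure of the unit sphere contains $0$. To defeat this I would use the Baire-category consequence recalled in Section~3: on a suitably chosen separable Banach subspace of $X^*$ — one containing the relevant $p_t^h$ and on which the images of the constraint differentials are controlled via (H4)--(H5) — weak-star convergence is upgraded to norm convergence, so that the unit-norm condition is preserved in the limit and forces $(\lambda_0, p_1, p_2) \neq 0$. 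Making precise which subspace to choose, and verifying that the equicontinuity/closed-range estimates coming from (H4)--(H5) genuinely place the multiplier sequence inside the locus where Baire applies, is the delicate heart of the argument.
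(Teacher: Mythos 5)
Your three-step architecture matches the paper's (it is essentially the schedule announced in the introduction), and steps one and two are right in substance, modulo a misattribution: (H6) is not used at the finite-horizon level to secure nontriviality of $(\lambda_0^h,p_1^h,p_2^h)$ --- that comes from (H4) via the interiority condition in Lemma 4.7 of \cite{BB2} --- rather, (H6) is precisely the hypothesis $ri(K)\neq\emptyset$ needed for the Baire-category Proposition \ref{prop32} in the limit passage. The genuine gap is in your third step, which you yourself flag as ``the delicate heart'' but then set up in a way that would not work. First, the normalization $\Vert(\lambda_0^h,p_1^h,p_2^h)\Vert=1$ with the full $X^*$-norms of $p_1^h,p_2^h$ is unusable: the only quantitative control available on the multipliers (conclusion (g) of Lemma \ref{lem43}) is the one-sided bound $p_t^h(z)\leq c_z\lambda_0^h$ for $z$ in the cones $Z_0=D_2f_0(\sigma,\hat u_0)(T_{U_0}(\hat u_0))$ and $Z_1=D_2f_1(\hat x_1,\hat u_1)(T_{U_1}(\hat u_1))$, so the norm convergence that Proposition \ref{prop32} delivers is only that of the \emph{control-space} functionals $q_i^h=p_i^h\circ D_2f_{i-1}(\cdot)$ restricted to $\Sigma=\overline{\rm aff}(T_{U_0}(\hat u_0)\times T_{U_1}(\hat u_1))$, not of $p_1^h,p_2^h$ in $X^*$. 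With your normalization, proving $\Vert(q_1^h,q_2^h)_{\mid\Sigma}\Vert\to 0$ and $\lambda_0^h\to 0$ yields no contradiction. The paper therefore normalizes by $\lambda_0^h+\Vert(q_1^h,q_2^h)_{\mid\Sigma}\Vert_{\Sigma^*}=1$, which is the choice that makes the contradiction argument close. Relatedly, the subspace on which Baire upgrades weak-star to norm convergence is a closed subspace $\Sigma$ of the \emph{predual} $U\times U$ (the closed affine hull of the product of tangent cones), not ``a separable Banach subspace of $X^*$'' as you write; Proposition \ref{prop32} concerns $\Vert\pi_h{}_{\mid S}\Vert_{S^*}$ for $S\subset Y$.

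Second, even granting $(\lambda_0,(q_1,q_2))\neq(0,(0,0))$, you still need to transfer nontriviality back to $(\lambda_0,p_1,p_2)$ and to obtain weak-star convergence of $p_2^h$ on all of $X$ with the \emph{correct} (possibly nonzero) limit --- extracting an arbitrary Alaoglu limit of $p_2^h$ gives no control and could be $0$. The paper does this through conclusion (7) of Proposition \ref{prop45}: by (H5) every $v\in X$ decomposes through $Z_0$ and $Z_1$, so $p_2^h(v)$ is expressed in terms of $q_1^h(\zeta_0)$, $q_2^h(\zeta_1)$ and $\lambda_0^h$, forcing $p_2^{\varphi_1(h)}(v)$ to converge for every $v$ and tying the limit $p_2$ to $(q_1,q_2)$; $p_1$ then comes from the adjoint equation. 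This reduction from the state multipliers to the control-space functionals and back, together with conclusion (5) of Proposition \ref{prop45} (nontriviality of $(\lambda_0^h,{p_1^h}_{\mid Z_0},{p_2^h}_{\mid Z_1})$, proved by contraposition from (H5)), is the content your proposal is missing.
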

\vskip2mm
In comparison with Theorem 2.2 in \cite{BB2}, in this theorem we have deleted the condition of finiteness of codimension which are present in assumptions (A5) and (A6) in \cite{BB2}.  It is why this theorem is an improvment of the result of \cite{BB2}.
\section{Functional analytic results}
In this section, first we recall an characterization of the closedness of the image of a linear continuous operator. Secondly
we state a result which is a consequence of the Baire category theorem. After we give a version of the diagonal process of Cantor for the weak-star convergence.
\begin{proposition}\label{prop31}
Let $E$ and $F$ be Banach spaces, and $L \in \mathfrak{L}(E,F)$ (the space of the linear continuous mappings). The two following assertions are equivalent.
\begin{enumerate}
\item[(i)] $Im L$ is closed in $F$.
\item[(ii)] There exists $c \in (0, + \infty)$ s.t. for all $y \in Im L$, there exists $x_y \in E$ verifying $L x_y = y$ and $\Vert y \Vert \geq c \Vert x_y \Vert$.
\end{enumerate}
\end{proposition}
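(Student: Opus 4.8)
The plan is to prove the two implications separately, the elementary one being (ii) $\Rightarrow$ (i) and the substantive one being (i) $\Rightarrow$ (ii). For (ii) $\Rightarrow$ (i), I would show directly that $\mathrm{Im}\, L$ contains its limit points by exploiting the uniform preimage bound together with the completeness of $E$. Given $y \in \overline{\mathrm{Im}\, L}$, I would pick $y_n \in \mathrm{Im}\, L$ with $y_n \to y$ and pass to a subsequence for which $\Vert y_{n_{k+1}} - y_{n_k} \Vert \leq 2^{-k}$. Each difference $y_{n_{k+1}} - y_{n_k}$ lies in $\mathrm{Im}\, L$, so (ii) furnishes $z_k \in E$ with $L z_k = y_{n_{k+1}} - y_{n_k}$ and $\Vert z_k \Vert \leq c^{-1} 2^{-k}$. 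Then $\sum_k z_k$ is absolutely convergent, hence convergent in the Banach space $E$; setting $x := x_0 + \sum_k z_k$ where $L x_0 = y_{n_1}$, a telescoping argument shows $L$ applied to the partial sums equals $y_{n_{k+1}} \to y$, and continuity of $L$ gives $L x = y$. Thus $y \in \mathrm{Im}\, L$ and the image is closed.

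For the converse (i) $\Rightarrow$ (ii), the key idea is to factor $L$ through the quotient by its kernel and invoke the Banach isomorphism theorem. Since $L$ is continuous, $N := \ker L$ is a closed subspace, so $E/N$ is a Banach space and $L$ induces a continuous injection $\bar{L} : E/N \to F$ with $\mathrm{Im}\, \bar{L} = \mathrm{Im}\, L$. Assumption (i) makes $\mathrm{Im}\, L$ a closed subspace of the Banach space $F$, hence itself a Banach space, so $\bar{L} : E/N \to \mathrm{Im}\, L$ is a continuous bijection between Banach spaces. The Banach isomorphism theorem then yields a constant $C \in (0, +\infty)$ with $\Vert \xi \Vert_{E/N} \leq C \Vert \bar{L} \xi \Vert$ for all $\xi \in E/N$. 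Rewriting the quotient norm as an infimum over preimages, this reads $\inf \{ \Vert x' \Vert : L x' = y \} \leq C \Vert y \Vert$ for every $y \in \mathrm{Im}\, L$. Since this infimum need not be attained, I would select $x_y$ with $L x_y = y$ and $\Vert x_y \Vert \leq 2 C \Vert y \Vert$ (and $x_y = 0$ when $y = 0$); taking $c := 1/(2C)$ gives $\Vert y \Vert \geq c \Vert x_y \Vert$, as required.

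The genuinely nontrivial ingredient, and the step I expect to be the main obstacle, is the use of the Banach isomorphism theorem in the (i) $\Rightarrow$ (ii) direction: it is exactly what converts the qualitative closedness of the range into the quantitative, uniform preimage bound. In the other direction the crucial structural input is instead the completeness of $E$, which is what lets the absolutely summable series $\sum_k z_k$ converge. A minor technical point to keep in mind is that the quotient norm is an infimum that may fail to be attained, so one cannot expect an optimal preimage and must allow a fixed multiplicative slack (the factor $2$, or any $1 + \varepsilon$); I would also verify the degenerate case $y = 0$ separately, where $x_y = 0$ trivially satisfies the inequality.
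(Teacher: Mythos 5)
Your proof is correct. Note that the paper does not prove this proposition itself but simply cites Lemma 3.4 of [BB1] and Lemma 2.1 of [BCi]; your argument --- the quotient $E/\ker L$ together with the Banach isomorphism theorem for (i) $\Rightarrow$ (ii), and an absolutely summable telescoping series for (ii) $\Rightarrow$ (i) --- is the standard proof of this fact and is consistent with the tools (Baire category, Banach isomorphism theorem) the authors announce in their introduction, including the correct handling of the non-attained quotient-norm infimum via the factor $2$.
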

This result is proven in \cite{BB1} (Lemma 3.4) and in \cite{BCi} (Lemma 2.1).
\vskip2mm
\begin{proposition}\label{prop32}
Let $Y$ be a real Banach space; $Y^*$ is its topological dual space. Let $(\pi_h)_{h \in \N} \in (Y^*)^{\N}$ and $(\rho_h)_{h \in \N} \in (\R_+)^{\N}$. Let $K$ be a nonempty closed convex subset of $Y$ such that $ri(K) 
\neq \emptyset$. Let $a \in K$ and we set  $S := \overline{\rm aff}(K) - a$ which is a Banach subspace. We assume that the following conditions are fulfilled.
\begin{enumerate} 
\item $\rho_h \rightarrow 0$ when $h \rightarrow + \infty$.
\item $\pi_h \overset{w^*}{\rightarrow} 0$ (weak-star convergence) when $h \rightarrow + \infty$.
\item For all $y \in K$, there exists $c_y \in \R$ such that $\pi_h(y) \leq c_y \rho_h$ for all $h \in \N$.
\end{enumerate}
Then we have $\Vert {\pi_h}_{\mid_S} \Vert_{S^*} \rightarrow 0$ when $h \rightarrow + \infty$.
\end{proposition}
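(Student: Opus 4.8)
The plan is to exploit a Baire category argument on $K$ itself, converting the pointwise bounds of hypothesis (3) into a uniform bound on a relative ball, and then to turn that uniform bound into norm convergence on $S$ by a symmetrization around a relative interior point.

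First, for each $n \in \N$ I would introduce
$$F_n := \{ y \in K : \pi_h(y) \leq n\rho_h \ \text{for all}\ h \in \N \},$$
which is the intersection of $K$ with the closed half-spaces $\{\pi_h \leq n\rho_h\}$. Each $F_n$ is closed, since every $\pi_h$ is continuous, and convex, as an intersection of convex sets. Hypothesis (3) together with $\rho_h \geq 0$ gives $K = \bigcup_{n} F_n$: if $\pi_h(y) \leq c_y\rho_h$ for all $h$, then $y \in F_n$ for any integer $n \geq \max(c_y,0)$. Since $K$ is closed in the Banach space $Y$, it is a complete metric space, so the Baire category theorem yields an index $n_0$ for which $F_{n_0}$ has nonempty interior in $K$: there are $y_0 \in K$ and $\delta > 0$ with $B(y_0,\delta) \cap K \subseteq F_{n_0}$, hence $\pi_h(y) \leq n_0\rho_h$ for all $h$ and all $y \in B(y_0,\delta) \cap K$.

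Next I would relocate this interior point into $ri(K)$. Fixing $u_0 \in ri(K)$ (nonempty by hypothesis), the standard convexity fact that $(1-t)y_0 + tu_0 \in ri(K)$ for $t \in (0,1]$ (cf. \cite{Zal}) produces, for small $t$, a point $z_0 \in ri(K) \cap B(y_0,\delta)$. Because $z_0 \in ri(K)$ there is $\rho > 0$ with $B(z_0,\rho) \cap {\rm aff}(K) \subseteq K$, and after shrinking $\rho$ I may also assume $B(z_0,\rho) \subseteq B(y_0,\delta)$. Combining the two inclusions gives $B(z_0,\rho) \cap {\rm aff}(K) \subseteq B(y_0,\delta) \cap K \subseteq F_{n_0}$, so that $\pi_h(y) \leq n_0\rho_h$ for all $h$ and all $y \in B(z_0,\rho) \cap {\rm aff}(K)$.

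The decisive step is symmetrization. Writing $V_0 := {\rm aff}(K) - z_0$, for every $w \in V_0$ with $\Vert w\Vert < \rho$ both $z_0 + w$ and $z_0 - w$ lie in $B(z_0,\rho) \cap {\rm aff}(K)$, so applying the previous bound to each gives $\pi_h(z_0) + \pi_h(w) \leq n_0\rho_h$ and $\pi_h(z_0) - \pi_h(w) \leq n_0\rho_h$, whence
$$|\pi_h(w)| \leq n_0\rho_h - \pi_h(z_0) =: \eta_h.$$
The point is that $\eta_h$ is independent of $w$, and $\eta_h \to 0$: indeed $\rho_h \to 0$ by (1) and $\pi_h(z_0) \to 0$ by the weak-star convergence (2). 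By homogeneity this bounds the operator norm of $\pi_h$ on $V_0$ by $(2/\rho)\eta_h$; and since $z_0 \in K \subseteq {\rm aff}(K)$ gives $\overline{V_0} = \overline{\rm aff}(K) - a = S$, the density of $V_0$ in $S$ and the continuity of $\pi_h$ yield $\Vert {\pi_h}_{\mid_S}\Vert_{S^*} = \Vert {\pi_h}_{\mid_{V_0}}\Vert \leq (2/\rho)\eta_h \to 0$. I expect the main obstacle to be precisely the passage from the Baire point $y_0$ to a genuine relative interior point $z_0$ while preserving the uniform estimate, because the symmetrization truly requires the relative ball $B(z_0,\rho)\cap{\rm aff}(K)$ to be symmetric about its center, which fails at relative boundary points; this is where the hypothesis $ri(K)\neq\emptyset$ and the density of $ri(K)$ in $K$ are indispensable. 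The interplay of the two smallness conditions (1) and (2), which together force the $w$-independent bound $\eta_h$ to vanish, is what ultimately converts weak-star convergence into norm convergence on $S$.
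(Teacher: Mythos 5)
Your proof is correct: the Baire decomposition of $K$ into the closed convex sets $F_n$, the relocation of the Baire point into $ri(K)$ via the line segment principle, and the symmetrization $z_0\pm w$ (which is exactly where hypotheses (1) and (2) combine to make the $w$-independent bound $\eta_h$ vanish) all go through, and the density of ${\rm aff}(K)-z_0$ in $S$ closes the argument. The paper does not reproduce a proof of this proposition --- it cites Proposition 3.5 of \cite{BB2} --- but it explicitly describes the result as a consequence of the Baire category theorem, so your argument follows essentially the same route as the cited source.
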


This result is established in \cite{BB2} (Proposition 3.5) where several consequences and generalizations are provided.
In the following result, when $t \in \N$, we set $[t, + \infty)_{\N} := [t, + \infty) \cap \N$ and $\N_* := [1, + \infty)_{\N}$.
\vskip2mm
\begin{proposition}\label{prop33} Let $Y$ be a real Banach space; $Y^*$ is its topological dual space. For every $(t,h) \in \N \times \N_*$ such that $t \leq h$ we consider an element $\pi_{t+1}^h \in Y^*$.
We assume that, for every $t \in \N$, the sequence $(\pi_{t+1}^h)_{h \in [t, + \infty)_{\N}}$ is bounded in $Y^*$. Then there exists an increasing function $\beta : \N_* \rightarrow \N_*$ such that, for all $t \in  \N$, there exists $\overline{\pi}_{t+1} \in Y^*$ verifying $\pi_{t+1}^{\beta (h)}  \overset{w^*}{\rightarrow} \overline{\pi}_{t+1}$ when $h \rightarrow + \infty$.
\end{proposition}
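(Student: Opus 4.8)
The statement is a standard Cantor diagonal extraction, and the only genuine analytic input is the weak-star \emph{sequential} compactness of bounded subsets of $Y^*$. The plan is to combine the Banach--Alaoglu theorem, which makes the closed balls of $Y^*$ weak-star compact, with the metrizability of these balls for the weak-star topology; this metrizability holds because $Y$ is separable (in the application $Y = X$, which is separable by (H1)), and it is what upgrades compactness to sequential compactness. Consequently, from any bounded sequence in $Y^*$ one can extract a weak-star convergent subsequence, and the whole proof reduces to organizing countably many such extractions coherently.

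First I would treat the indices $t$ one after another. For $t = 0$ the hypothesis says $(\pi_1^h)_{h \in \N_*}$ is bounded, so there are an increasing $\sigma_0 : \N_* \to \N_*$ and an element $\overline{\pi}_1 \in Y^*$ with $\pi_1^{\sigma_0(h)} \overset{w^*}{\to} \overline{\pi}_1$. Inductively, assume the increasing function $\sigma_{t-1}$ has been constructed; since $(\pi_{t+1}^{\sigma_{t-1}(h)})_h$ is a subsequence of the bounded sequence $(\pi_{t+1}^h)_h$, it is bounded, hence admits a weak-star convergent subsequence: there are an increasing $\tau_t : \N_* \to \N_*$ and $\overline{\pi}_{t+1} \in Y^*$ with $\pi_{t+1}^{\sigma_{t-1}(\tau_t(h))} \overset{w^*}{\to} \overline{\pi}_{t+1}$. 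I then set $\sigma_t := \sigma_{t-1} \circ \tau_t$, which is again increasing and refines $\sigma_{t-1}$, in the sense that $\sigma_t(h) \geq \sigma_{t-1}(h)$ and $\mathrm{Im}(\sigma_t) \subset \mathrm{Im}(\sigma_{t-1})$. This produces a nested family of extractions $\sigma_0, \sigma_1, \sigma_2, \dots$ such that along $\sigma_t$ the $(t+1)$-th sequence converges weak-star to $\overline{\pi}_{t+1}$.

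Next I would define the diagonal $\beta(h) := \sigma_h(h)$ for $h \in \N_*$. To see that $\beta$ is increasing, recall that an increasing map $\N_* \to \N_*$ satisfies $\tau(k) \geq k$, so $\beta(h+1) = \sigma_{h+1}(h+1) = \sigma_h(\tau_{h+1}(h+1)) \geq \sigma_h(h+1) > \sigma_h(h) = \beta(h)$. To obtain the required convergence, fix $t \in \N$. For every $h \geq t$ one has $\sigma_h = \sigma_t \circ \tau_{t+1} \circ \cdots \circ \tau_h$, hence $\beta(h) = \sigma_h(h) \in \mathrm{Im}(\sigma_t)$; since $\beta$ is strictly increasing, $(\beta(h))_{h \geq t}$ is a strictly increasing sequence inside $\mathrm{Im}(\sigma_t)$, and is therefore a subsequence of $(\sigma_t(k))_k$. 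As $\pi_{t+1}^{\sigma_t(k)} \overset{w^*}{\to} \overline{\pi}_{t+1}$ and a weak-star limit is inherited by subsequences, I conclude $\pi_{t+1}^{\beta(h)} \overset{w^*}{\to} \overline{\pi}_{t+1}$, which is exactly the claim (note that $\beta(h) \geq h \geq t$ guarantees $\pi_{t+1}^{\beta(h)}$ is defined for the tail that matters).

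The routine parts are the bookkeeping of the nested extractions and the verification that $\beta$ is increasing. The one point deserving attention is the very first analytic step, namely that boundedness yields weak-star convergent \emph{subsequences}: this is where separability of $Y$ is indispensable, since without metrizability of bounded sets in the weak-star topology Banach--Alaoglu provides compactness but not sequential compactness, and the diagonal scheme could not even be initiated. I expect no difficulty beyond making this separability hypothesis explicit, which is harmless here because $X$ is separable by (H1).
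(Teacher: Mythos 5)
Your proof is correct and follows essentially the same route as the paper's: iterated extractions via Banach--Alaoglu (your $\sigma_t$ is the paper's $\alpha_1\circ\cdots\circ\alpha_t$, your $\tau_t$ its $\alpha_t$), the diagonal $\beta(h)=\sigma_h(h)$, and the verification that $(\beta(h))_{h\ge t}$ is a subsequence of $(\sigma_t(k))_k$ --- which you argue by image containment plus strict monotonicity where the paper writes the explicit factorization $\beta\vert_{[t,+\infty)_{\N}}=\sigma_t\circ\delta_t$. Your explicit remark that separability of $Y$ is needed to upgrade weak-star compactness to sequential compactness is a genuine (if minor) improvement, since the proposition as stated omits this hypothesis and the paper's proof invokes Banach--Alaoglu without comment, relying tacitly on (H1) in the application.
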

\begin{proof}
Using the Banach-Alaoglu theorem, since $(\pi_1^h)_{h \in [0, + \infty)_{\N}}$ is bounded in $Y^*$, there exists an increasing function $\alpha_1 : [0, + \infty)_{\N} \rightarrow[0, + \infty)_{\N}$ and $\overline{\pi}_1 \in Y^*$ such that $
\pi_1^{\alpha_1(h)} \overset{w^*}{\rightarrow} \overline{\pi}_1$ when $h \rightarrow + \infty$. Using the same argument, since $(\pi_2^{\alpha_1(h)})_{h \in [1, + \infty)_{\N}}$ is bounded, there exists an increasing function $\alpha_2 : [1, + \infty)_{\N} \rightarrow [1, + \infty)_{\N}$ and $\overline{\pi}_2 \in Y^*$ such that $\pi_2^{\alpha_1 \circ \alpha_2 (h)} \overset{w^*}{\rightarrow} \overline{\pi}_2$ when $h \rightarrow + \infty$. Iterating the reasoning, for every $t \in  \N_*$, there exist an increasing function $\alpha_t : [t, + \infty)_{\N} \rightarrow [t, + \infty)_{\N}$ and $\overline{\pi}_{t+1} \in Y^*$ such that $\pi_{t+1}^{\alpha_1 \circ ... \circ \alpha_t(h) } \overset{w^*}{\rightarrow} \overline{\pi}_{t+1}$ when $h \rightarrow + \infty$. We define the function $\beta : [0, + \infty)_{\N} \rightarrow  [0, + \infty)_{\N}$ by setting $\beta(h) := \alpha_1 \circ ... \circ \alpha_h(h)$. we arbitrarily fix $t \in \N_*$  and we define the function $\delta_t : [t, + \infty)_{\N} \rightarrow [t, + \infty)_{\N}$ by setting $\delta_t(t) := t$ and $\delta_t(h) := \alpha_{t+1} \circ ... \circ \alpha_h(h)$ when $h > t$.
When $h = t$, we have $\delta_t(t+1) = \alpha_{t+1}(t+1) \geq t+1 > t = \delta_t(t)$. When $h \in [t+1, + \infty)_{\N}$, we have $\alpha_{t+1}(h+1) \geq h+1 > h$ which implies 
$$\delta_t(h+1) = (\alpha_{t+1} \circ ... \circ \alpha_h)( \alpha_{t+1}(h+1)) >  (\alpha_{t+1} \circ ... \circ \alpha_h)(h) = \delta_t(h)$$
since $(\alpha_{t+1} \circ ... \circ \alpha_h)$ is increasing. Hence we have proven that $\delta_t$ is increasing. Since $\beta_{\mid_{[t, + \infty)_{\N}}} = (\alpha_1 \circ ... \circ \alpha_t) \circ \delta_t$, we can say that $(\pi_{t+1}^{\beta(h)})_{h \in [t, + \infty)_{\N}}$ is a subsequence of  $(\pi_{t+1}^{\alpha_1 \circ ... \circ \alpha_t(h)})_{h \in [t, + \infty)_{\N}}$,  we obtain 
$\pi_{t+1}^{\beta (h)}  \overset{w^*}{\rightarrow} \overline{\pi}_{t+1}$ when $h \rightarrow + \infty$.
\end{proof}
\section{reduction to the finite horizon}
When  $((\hat{x}_t)_{t \in \N}, (\hat{u}_t)_{t \in \N})$ is an optimal process for one of the problems (${\bf P}_i(\sigma)$), $i \in \{1,2,3 \}$. The method of the rediction to finite horizon consists on considering of the 
 sequence of the following finite-horizon problems.
\[ ({\bf F}_h(\sigma))
\left\{
\begin{array}{rl}
{\rm Maximize} & J_h(x_1,...,x_h,u_0,...,u_h) := \sum_{t=0}^h \phi_t(x_t,u_t)\\
{\rm when}& (x_t)_{1 \leq t \leq h} \in \prod_{t=1}^h X_t,  (u_t)_{0 \leq t \leq h} \in \prod_{t=0}^h U_t \\
\null & \forall t \in \{0,...,h \}, x_{t+1} = f_t(x_t,u_t)\\
\null & x_0 = \sigma, x_{h+1} = \hat{x}_{t+1}
\end{array}
\right.
\]
The proof of the following lemma is given in \cite{BCh}.
\begin{lemma}\label{lem41} 
When $((\hat{x}_t)_{t \in \N}, (\hat{u}_t)_{t \in \N})$ is an optimal process for one of the problems (${\bf P}_i(\sigma)$), $i \in \{1,2,3 \}$, then, for all $h \in \N_*$, $(\hat{x}_1, ...,  \hat{x}_h, \hat{u}_0, ..., \hat{u}_h)$ is an optimal solution of $({\bf F}_h(\sigma))$.
\end{lemma}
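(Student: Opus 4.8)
The plan is to prove the contrapositive by a gluing argument, exactly as one reduces an infinite-horizon optimum to a finite-horizon one. First I would fix $h \in \N_*$ and suppose, for contradiction, that the truncation $(\hat{x}_1,\dots,\hat{x}_h,\hat{u}_0,\dots,\hat{u}_h)$ is \emph{not} optimal for $({\bf F}_h(\sigma))$. Then there exists a competitor $(x_1,\dots,x_h,u_0,\dots,u_h)$ that is feasible for $({\bf F}_h(\sigma))$ --- so that, with $x_0:=\sigma$, we have $x_t\in X_t$, $u_t\in U_t$, $x_{t+1}=f_t(x_t,u_t)$ for all $t\in\{0,\dots,h\}$, and in particular $x_{h+1}=\hat{x}_{h+1}$ --- and which strictly improves the criterion, $\sum_{t=0}^h \phi_t(x_t,u_t) > \sum_{t=0}^h \phi_t(\hat{x}_t,\hat{u}_t)$.

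Next I would extend this finite competitor to an infinite process $((y_t)_{t\in\N},(v_t)_{t\in\N})$ by setting $(y_t,v_t):=(x_t,u_t)$ for $t\le h$ and $(y_t,v_t):=(\hat{x}_t,\hat{u}_t)$ for $t\ge h+1$. The crucial verification is that this glued process lies in $Adm(\sigma)$. For $t\le h-1$ the dynamics hold by feasibility of the competitor; for $t\ge h+1$ they hold because $((\hat{x}_t),(\hat{u}_t))$ is itself admissible; and at the junction $t=h$ they hold precisely because the terminal constraint $x_{h+1}=\hat{x}_{h+1}$ of $({\bf F}_h(\sigma))$ forces the endpoint $f_h(x_h,u_h)=x_{h+1}$ to coincide with the tail value $\hat{x}_{h+1}=y_{h+1}$. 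This compatibility at the single date $t=h$ is the one genuinely delicate point of the argument, since it is exactly where the endpoint condition $x_{h+1}=\hat{x}_{h+1}$ enters; everything else is bookkeeping.

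Finally I would compare the criteria. Because $(y_t,v_t)=(\hat{x}_t,\hat{u}_t)$ for every $t\ge h+1$, the tails cancel, and for every $h'\ge h$,
$$\sum_{t=0}^{h'}\big(\phi_t(\hat{x}_t,\hat{u}_t)-\phi_t(y_t,v_t)\big)=\sum_{t=0}^{h}\big(\phi_t(\hat{x}_t,\hat{u}_t)-\phi_t(x_t,u_t)\big)=-\Delta,$$
where $\Delta:=\sum_{t=0}^h\phi_t(x_t,u_t)-\sum_{t=0}^h\phi_t(\hat{x}_t,\hat{u}_t)>0$ is a constant. For $({\bf P}_1(\sigma))$ the glued process belongs to $Dom(J)$ (its tail coincides with the convergent tail of the series for $((\hat{x}_t),(\hat{u}_t))$), and $J((y_t),(v_t))=J((\hat{x}_t),(\hat{u}_t))+\Delta$ strictly exceeds the optimal value, a contradiction. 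For $({\bf P}_2(\sigma))$ and $({\bf P}_3(\sigma))$ the displayed partial sums are eventually equal to the constant $-\Delta$, so both their $\limsup$ and their $\liminf$ equal $-\Delta<0$, contradicting the defining inequalities $\limsup\ge 0$ and $\liminf\ge 0$ respectively. In each case the contradiction proves that the truncation is optimal for $({\bf F}_h(\sigma))$, which is the assertion of the lemma.
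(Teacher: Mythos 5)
Your proof is correct, and it is precisely the standard gluing-and-contradiction argument that the paper itself does not reproduce but delegates to the reference \cite{BCh}; the key points (the junction at $t=h$ via the endpoint constraint $x_{h+1}=\hat{x}_{h+1}$, and the cancellation of the tails making the partial sums eventually constant) are exactly what makes the reduction work for all three optimality notions simultaneously. Nothing is missing.
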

Notice that this result does not need any special assumption. Now we introduce notation to work on these problems. We write ${\bf x}^h := (x_1,...,x_h) \in \prod_{t=1}^h X_t$, ${\bf u}^h := (u_0,...,u_h) \in \prod_{t=0}^h U_t$. For all $h \in \N_*$ and for all $t \in \N$, we introduce the mapping $g_t^h : (\prod_{t=1}^h X_t) \times (\prod_{t=0}^h U_t) \rightarrow X_{t+1}$ by setting
\begin{equation}\label{eq41}
g_t^h({\bf x}^h,{\bf u}^h) := 
\left\{
\begin{array}{lcl}
-x_1 + f_0(\sigma, u_0) & {\rm if} & t=0\\
-x_{t+1} + f_t(x_t,u_t) & {\rm if} & t \in \{ 1,...,h-1 \} \\
- \hat{x}_{h+1} + f_h(x_h,u_h).
\end{array}
\right.
\end{equation}
We introduce the mapping $g^h : (\prod_{t=1}^h X_t) \times (\prod_{t=0}^h U_t) \rightarrow X^{h+1}$ defined by
\begin{equation}\label{eq42}
g^h({\bf x}^h,{\bf u}^h) := (g_0^h({\bf x}^h,{\bf u}^h), ..., g_h^h({\bf x}^h,{\bf u}^h)).
\end{equation}
Under (H3), $g^h$ is of class $C^1$. We introduce the following conditions on the differentials of the $f_t$. 
\begin{equation}\label{eq43}
\forall t \in \N, \;\; ImDf_t(\hat{x}_t, \hat{u}_t) \;\; {\rm is} \;\; {\rm closed} \;\; {\rm in} \;\; X.
\end{equation}
\begin{equation}\label{eq44}
\forall t \in \N_*,\\
Im(D_1f_t(\hat{x}_t, \hat{u}_t) \circ D_2 f_{t-1}(\hat{x}_{t-1}, \hat{u}_{t-1})) + Im D_2f_t(\hat{x}_t, \hat{u}_t) = Im Df_t(\hat{x}_t, \hat{u}_t).
\end{equation}
\begin{equation}\label{eq45}
\forall t \in \N, t \geq 2, \;\; ImDf_t(\hat{x}_t, \hat{u}_t) = X.
\end{equation}
\begin{equation}\label{eq46}
Im (D_1 f_1(\hat{x}_1, \hat{u}_1) \circ D_2 f_0(\sigma, \hat{u}_0)) + Im D_2f_1(\hat{x}_1, \hat{u}_1) =X.
\end{equation}
\begin{lemma}\label{lem42} We assume that (H3) is fulfilled.
\begin{enumerate}
\item[(i)] Under (\ref{eq43}) and (\ref{eq44}), $ImDg^h({\bf x}^h,{\bf u}^h)$ is closed.
\item[(ii)] Under (\ref{eq45}) and (\ref{eq46}), $Dg^h({\bf x}^h,{\bf u}^h)$ is surjective.
\end{enumerate}
\end{lemma}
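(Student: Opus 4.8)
The plan is to compute the differential $Dg^h(\mathbf{x}^h,\mathbf{u}^h)$ explicitly and then read off its range directly from the recursive structure of the block-triangular system it produces. Writing $h+1$ component equations corresponding to $g_0^h,\dots,g_h^h$, the differential acts on an increment $((\delta x_t)_{1\le t\le h},(\delta u_t)_{0\le t\le h})$ by
\begin{equation}\label{eqdiff}
(Dg^h(\mathbf{x}^h,\mathbf{u}^h)\cdot(\delta\mathbf{x},\delta\mathbf{u}))_t =
\left\{
\begin{array}{lcl}
-\delta x_1 + D_2f_0(\sigma,\hat{u}_0)\delta u_0 & {\rm if} & t=0\\
-\delta x_{t+1} + D_1f_t\,\delta x_t + D_2f_t\,\delta u_t & {\rm if} & 1\le t\le h-1\\
D_1f_h\,\delta x_h + D_2f_h\,\delta u_h & {\rm if} & t=h,
\end{array}
\right.
\end{equation}
where all differentials are evaluated at $(\hat{x}_t,\hat{u}_t)$. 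The crucial feature is that the $\delta x_{t+1}$ variable for $0\le t\le h-1$ appears with a $-{\rm Id}$ coefficient and in no later equation except through $D_1f_{t+1}\delta x_{t+1}$ in row $t+1$; so the $\delta x$'s act as free variables that let us adjust successive components.

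For surjectivity (part (ii)), I would argue by a backward-then-forward elimination. Given a target $(y_0,\dots,y_h)\in X^{h+1}$, first use rows $0$ through $h-1$ to solve recursively for $\delta x_1,\dots,\delta x_h$ in terms of arbitrary choices of the $\delta u_t$ and the $y_t$: row $t$ gives $\delta x_{t+1} = -y_t + D_1f_t\,\delta x_t + D_2f_t\,\delta u_t$ (with $\delta x_0$ replaced by the fixed $\sigma$-contribution in row $0$). This determines $\delta x_h$ as an affine expression; substituting into the final row $h$ reduces surjectivity to hitting $y_h$ minus the accumulated affine term by the map $\delta u_h\mapsto D_2f_h\,\delta u_h$ together with the freedom still available in $\delta u_{h-1}$ entering through $D_1f_h\,\delta x_h = D_1f_h D_2f_{h-1}\,\delta u_{h-1}+\cdots$. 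Here \eqref{eq45} says $ImDf_t=X$ for $t\ge2$, which handles the intermediate surjectivity needed to absorb the $y_t$ at each stage, while \eqref{eq46}, namely $Im(D_1f_1\circ D_2f_0)+Im D_2f_1 = X$, is exactly the condition that closes the recursion at the bottom dates $t=0,1$ where the tangent-cone structure forces us to use only $D_2$ directions.

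For closedness of the range (part (i)), I would not try to invert but instead characterize $ImDg^h$ as the set of $(y_0,\dots,y_h)$ satisfying a finite list of membership constraints, each asserting that a certain accumulated combination lies in $ImDf_t(\hat{x}_t,\hat{u}_t)$. Condition \eqref{eq43} guarantees each such $ImDf_t$ is closed in $X$, and condition \eqref{eq44}, which states $Im(D_1f_t\circ D_2f_{t-1})+Im D_2f_t = ImDf_t$, is precisely what is needed to show that the range of the coupled two-date block equals the range of the single differential $Df_t$, so that the recursive elimination of the $\delta x$ variables produces constraints living in closed subspaces rather than in possibly non-closed sums. The range is then an intersection of preimages of closed sets under continuous linear maps, hence closed. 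The main obstacle I anticipate is the bookkeeping at the two ends of the chain: the variable $\delta x_{h+1}$ is frozen (the terminal condition $x_{h+1}=\hat{x}_{h+1}$ makes row $h$ lack a $-{\rm Id}$ term), and the initial condition $x_0=\sigma$ removes the first state increment, so dates $0,1,h$ must be treated separately from the generic recursion, and it is exactly the mismatch between the generic hypotheses \eqref{eq44}--\eqref{eq45} and the boundary hypotheses \eqref{eq46} (together with \eqref{eq43} for closedness) that one must verify lines up correctly.
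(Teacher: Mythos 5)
Your strategy is sound and, for part (i), genuinely different from the paper's. Writing $D_i\hat f_t$ for $D_if_t(\hat x_t,\hat u_t)$: the paper never identifies $Im\,Dg^h$ as a set; it invokes the quantitative criterion of Proposition \ref{prop31} (closed range iff preimages exist with uniformly controlled norm) and constructs, by a forward recursion through the trivially surjective operators $(y_{t+1},v_t)\mapsto -y_{t+1}+D_2\hat f_t v_t$ and then through the two-date operator $\Lambda(v,w)=D_1\hat f_h\circ D_2\hat f_{h-1}v+D_2\hat f_h w$ (whose image equals $Im\,D\hat f_h$ by \eqref{eq44} and is closed by \eqref{eq43}), a preimage of each target with explicit constants $a_0,a_1,\dots,a_h$. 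Your route --- eliminate the state increments, observe that rows $0,\dots,h-1$ impose no constraint because each $\delta x_{t+1}$ enters with coefficient $-\mathrm{Id}$, and exhibit the range as the preimage of a single closed subspace under the continuous linear map $\mathbf z^h\mapsto z_h-D_1\hat f_h\,c(z_0,\dots,z_{h-1})$, where $c$ collects the accumulated terms --- avoids all the norm bookkeeping and is arguably cleaner; the paper's method has the advantage of proceeding directly from the abstract closed-range criterion without having to describe the range. Two precisions: after elimination there is exactly \emph{one} membership constraint (on row $h$), not one per date, and the subspace whose closedness matters is $D_1\hat f_h(V_h)+Im\,D_2\hat f_h$, where $V_h$ is the reachable set of $\delta x_h$; since $V_h\supseteq Im\,D_2\hat f_{h-1}$ and everything else lands in $Im\,D_1\hat f_h$, condition \eqref{eq44} at $t=h$ identifies this subspace with $Im\,D\hat f_h$, closed by \eqref{eq43}.

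For (ii) you run the elimination forward where the paper runs it backward (solve row $h$ by surjectivity of $D\hat f_h$, descend to $t=2$ via \eqref{eq45}, and close rows $0,1$ with $M(v_0,v_1)=D_1\hat f_1\circ D_2\hat f_0v_0+D_2\hat f_1v_1$, surjective by \eqref{eq46}). Your version can be made to work, but your description misplaces the hypotheses. In the forward scheme no surjectivity is needed to ``absorb the $y_t$'' at intermediate dates --- the free $\delta x_{t+1}$ do that unconditionally; the actual role of \eqref{eq45} is to propagate the statement ``the reachable set of $\delta x_{t+1}$ is all of $X$'' from $t=1$, where \eqref{eq46} initializes it, up to $t=h-1$. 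Moreover the final row cannot in general be solved using only the freedom in $\delta u_{h-1}$ and $\delta u_h$: that would require $Im(D_1\hat f_h\circ D_2\hat f_{h-1})+Im\,D_2\hat f_h=X$, which is \emph{not} implied by \eqref{eq45}--\eqref{eq46} (it is the (H4)-type condition). You must feed the full reachability of $\delta x_h$, inherited from all earlier controls, into $D_1\hat f_h$ and then use $Im\,D_1\hat f_h+Im\,D_2\hat f_h=X$ from \eqref{eq45}; with that correction the forward argument goes through.
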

\begin{proof} {\bf (i)}
To abridge the writing we set $D \hat{f}_t := D f_t(\hat{x}_t, \hat{u}_t)$ and $D_i \hat{f}_t := D_i f_t(\hat{x}_t, \hat{u}_t)$ when $i \in \{1,2 \}$. The condition (H3) implies that $g^h$ is Fr\'echet differentiable at $({\bf x}^h, {\bf u}^h)$.
\vskip2mm
\noindent
We arbitrarily fix ${\bf z}^h = (z_0,...,z_h) \in Im Dg^h({\bf x}^h, {\bf u}^h)$. Therefore there exists ${\bf y}^{h,0} = (y_1^{0}, ..., y_h^{0}) \in X^h$ and ${\bf v}^{h,0} = (v_0^{0},...,v_h^{0}) \in U^{h+1}$ such that\\
  ${\bf z}^h = Dg^T({\bf x}^h, {\bf u}^h)({\bf y}^{h,0}, {\bf v}^{h,0})$ which is equivalent to the set of the three following equations
\begin{equation}\label{eq47}
- y_1^0 + D_2f_0(\sigma, \hat{u}_0) v_0^0 = z_0
\end{equation}
\begin{equation}\label{eq48}
\forall t \in \{1,...,h-1 \}, \;\;
- y_{t+1}^0 + D_1 \hat{f}_t y_t^0 + D_2 \hat{f}_t v_t^0 = z_t
\end{equation}
\begin{equation}\label{eq49}
D_1 \hat{f}_h y_h^0 + D_2 \hat{f}_h v_h^0 = z_h.
\end{equation}
We introduce the linear continuous operator $L_0 \in {\mathfrak L}(X \times U, X)$ by setting 
\begin{equation}\label{eq410} L_0(y_1,v_0) := - y_1 + D_2 \hat{f}_0 v_0.
\end{equation}
Notice that $L_0$ is surjective since $L_0(X \times \{ 0 \}) = X$; therefore $ImL_0$ is closed in $X$. From (\ref{eq47}) we have $z_0 \in Im L_0$. Using Proposition \ref{prop31} on $L_0$ we know that 
\[
\left\{
\begin{array}{l}
\exists a_0 \in (0, + \infty), \forall z_0 \in X, \exists y_1^* \in X, \exists v_0^* \in U \:\;\; {\rm s.t.} \; \; L_0(y_1^*, v_0^*) = z_0 \\
{\rm and} \;\; \max \{ \Vert y_1^* \Vert, \Vert v_0^* \Vert \} \leq a_0 \cdot \Vert z_0 \Vert
\end{array}
\right.
\]
i.e. we have proven
\begin{equation}\label{eq411}
\left.
\begin{array}{l}
\exists a_0 \in (0, + \infty), \exists y_1^* \in X, \exists v_0^* \in U \:\;\; {\rm s.t.} \\
- y_1^* + D_2 \hat{f}_0 v_0^* = z_0 \;\; {\rm and} \;\; \max \{ \Vert y_1^* \Vert, \Vert v_0^* \Vert \} \leq a_0 \cdot \Vert z_0 \Vert
\end{array}
\right\}
\end{equation}
It is important to notice that $a_0$ does not depend on $z_0$.
\vskip2mm
\noindent
We introduce the linear continuous operator $L_1 \in {\mathfrak L}(X \times U, X)$ by setting 
$$L_1(y_2, v_1) := - y_2 + D_2 \hat{f}_1 v_1.$$
Since $L_1(X \times \{ 0 \}) = X$, $L_1$ is surjective and hence $z_1 - D_1 \hat{f}_1 y_1^* \in Im L_1$. Using Proposition \ref{prop31} on $L_1$, we obtain
\[
\left\{
\begin{array}{l}
\exists b_1 \in (0, + \infty), \exists y_2^* \in X, \exists v_1^* \in U \;\; {\rm s.t.}\\
L_1(y_2^*, v_1^*) = z_1 - D_1 \hat{f}_1 y_1^* \;\; {\rm and} \\
\max \{ \Vert y_2^* \Vert, \Vert v_1^* \Vert \} \leq b_1 \cdot \Vert z_1 - D_1 \hat{f}_1 y_1^* \Vert.
\end{array}
\right.
\]
Using (\ref{eq411}) we deduce from the last inequality
\[
\begin{array}{l}
\max \{ \Vert y_2^* \Vert, \Vert v_1^* \Vert \} \leq b_1 \cdot ( \Vert z_1 \Vert + \Vert D_1 \hat{f}_1\Vert \cdot \Vert y_1^* \Vert) 
\leq b_1 \cdot ( \Vert z_1 \Vert + \Vert D_1 \hat{f}_1\Vert \cdot a_0 \cdot \Vert z_0 \Vert)\\
\leq b_1 \cdot (1 + a_0 \cdot \Vert D_1 \hat{f}_1\Vert) \cdot \max \{ \Vert z_0 \Vert, \Vert z_1 \Vert \}.
\end{array}
\]
We set $a_1 := \max \{ a_0,  b_1 \cdot (1 + a_0 \cdot \Vert D_1 \hat{f}_1\Vert) \}$, and then we have proven the following assertion.
\begin{equation}\label{eq412}
\left.
\begin{array}{l}
\exists a_1 \in (0, + \infty), \exists (y_1^*, y_2^*, v_0^*, v_1^*) \in X^2 \times U^2 \;\; {\rm s.t.} \\
- y_1^* + D_2 \hat{f}_0 v_0^* = z_0, \;\;
- y_2^* + D_1 \hat{f}_1 y_1^* + D_2 \hat{f}_1 v_1^* = z_1,\\
\max \{ \vert y_1^* \Vert, \Vert y_2^* \Vert, \Vert v_0^* \Vert, \Vert v_1^* \Vert \} \leq a_1 \cdot \max \{ \Vert z_0 \Vert, \Vert z_1 \Vert \}. 
\end{array}\right\}
\end{equation}
It is important to notice that $a_1$ does not depend on $z_0$, $z_1$. We iterate the reasoning until $h-2$ and we obtain
\begin{equation}\label{eq413}
\left.
\begin{array}{l}
\exists a_{h-2} \in (0, + \infty), \exists (y^*_t)_{1 \leq t \leq h-1} \in X^{h-1}, \exists (v^*_t)_{0 \leq t \leq h-2} \in U^{h-1} \; {\rm s.t.} \\
-y^*_1 + D_2 \hat{f}_0 v^*_0 = z_0, \;\;
\forall t \in \{1,...,h-2 \}, -t^*_{t+1} + D_1 \hat{f}_t y^*_t + D_2 \hat{f}_t v^*_t = z_t\\
\max \{ \max_{1 \leq t \leq h-1}\Vert y^*_t \Vert, \max_{0 \leq t \leq h-2} \Vert v^*_t \Vert \} \leq a_{h-2} \max_{0 \leq t \leq h-2} \Vert z_t \Vert.
\end{array}
\right\}
\end{equation} 
From (\ref{eq49}) we know that $z_h \in Im D\hat{f}_h$. Moreover we have 
$$D_1 \hat{f}_h z_{h-1} \subset Im D\hat{f}_h \;\; {\rm and} \; D_1 \hat{f}_h \circ D_1 \hat{f}_{h-1} y^*_{h-1} \in Im D_1 \hat{f}_h \subset Im D \hat{f}_h$$
and therefore we have
\begin{equation}\label{eq414}
z_h + D_1 \hat{f}_{h-1}z_{h-1} - D_1 \hat{f}_h \circ D_1 \hat{f}_{h-1} y^*_{h-1} \in Im D \hat{f}_h.
\end{equation}
Introduce the linear continuous operator $\Lambda \in {\mathfrak L}(U \times U, X)$ by setting 
\begin{equation}\label{eq415}
\Lambda (v,w) := D_1 \hat{f}_h \circ D_2 \hat{f}_{h-1} v + D_2 \hat{f}_h w.
\end{equation}
Under assumptions (\ref{eq44}) and (\ref{eq45}) we have 
$Im \Lambda = Im D\hat{f}_h$ and $Im \Lambda$ is closed in $X$. After (\ref{eq414}), using Proposition \ref{prop31}  on $\Lambda$ we obtain
\begin{equation}\label{eq416}
\left.
\begin{array}{l}
\exists c \in (0, + \infty), \exists (v_{h-1}^*, v_h^*) \in U \times U, \;  {\rm s.t.}\\
\Lambda(v_{h-1}^*, v_h^*) = z_h + D_1 \hat{f}_h z_{h-1} - D_1 \hat{f}_h \circ D_1 \hat{f}_{h-1} y_{h-1}^*\\
{\rm i.e.}\\
D_1 \hat{f}_h\circ D_2 \hat{f}_{h-1} v_{h-1}^* + D_2 \hat{f}_h v_h^* =\\
z_h + D_1 \hat{f}_h z_{h-1} - D_1 \hat{f}_h \circ D_1 \hat{f}_{h-1} y_{h-1}^*
\;\; {\rm and} \\
\max \{ \Vert  v_{h-1}^* \Vert, \Vert  v_{h}^* \Vert \} \leq 
c \cdot \Vert z_h + D_1 \hat{f}_h z_{h-1} - D_1 \hat{f}_h \circ D_1 \hat{f}_{h-1} y_{h-1}^* \Vert.
\end{array}
\right\}
\end{equation}
From this last inequality, using (\ref{eq413}), we obtain
\[
\begin{array}{l}
\max \{ \Vert  v_{h-1}^* \Vert, \Vert  v_{h}^* \Vert \}\\
\leq  c \cdot (\Vert z_h \Vert + \Vert D_1 \hat{f}_h \Vert \cdot \Vert z_{h-1} \Vert + \Vert D_1 \hat{f}_h \circ D_1 \hat{f}_{h-1} \Vert \cdot \Vert y_{h-1}^* \Vert)\\
\leq c \cdot (\Vert z_h \Vert + \Vert D_1 \hat{f}_h \Vert \cdot \Vert z_{h-1} \Vert + \Vert D_1 \hat{f}_h \circ D_1 \hat{f}_{Th1} \Vert \cdot a_{h-2} \cdot \max_{1 \leq t \leq h-2} \Vert z_t \Vert )\\
\leq c \cdot (1 + \Vert D_1 \hat{f}_h \Vert + a_{h-2} \cdot  \Vert D_1 \hat{f}_h \circ D_1 \hat{f}_{h-1} \Vert) \cdot \max_{1 \leq t \leq h} \Vert z_t \Vert.
\end{array}
\] 
We set $c_1 := c \cdot (1 + \Vert D_1 \hat{f}_h \Vert + a_{h-2} \cdot  \Vert D_1 \hat{f}_h \circ D_1 \hat{f}_{h-1} \Vert) \in (0, + \infty)$.
Then we have proven the following assertion.
\begin{equation}\label{eq417}
\exists c_1 \in (0, + \infty), \max \{ \Vert  v_{h-1}^* \Vert, \Vert  v_{h}^* \Vert \} \leq c_1 \cdot \max_{1 \leq t \leq h} \Vert z_t \Vert. 
\end{equation}
We set 
\begin{equation}\label{eq418}
y_h^* := D_2 \hat{f}_{h-1} v_{h-1}^* + D_1 \hat{f}_{h-1} y_{h-1}^* - z_{h-1}.
\end{equation}
This equality implies
\begin{equation}\label{eq419}
- y_h^* + D_1 \hat{f}_{h-1} y_{h-1}^* +  D_2 \hat{f}_{h-1} v_{h-1}^* = z_{h-1}
\end{equation}
which is the penultimate wanted equation.
\vskip1mm
\noindent
Notice that we have $\Vert y_h^* \Vert \leq \Vert D_2 \hat{f}_{h-1} \Vert \cdot \Vert v_{h-1}^* \Vert + \Vert D_1 \hat{f}_{h-1} \Vert \cdot \Vert y_{h-1}^* \Vert + \Vert z_{h-1} \Vert,$
and using (\ref{eq417}) and (\ref{eq418}) we obtain
\[
\begin{array}{ccl}
\Vert y_h^* \Vert & \leq & \Vert D_2 \hat{f}_{h-1} \Vert \cdot c_1 \cdot \max_{1 \leq t \leq h} \Vert z_t \Vert\\
\null & \null & + \Vert D_1 \hat{f}_{h-1} \Vert \cdot a_{h-2} \cdot \max_{1 \leq t \leq h-2} \Vert z_t \Vert + \Vert z_{h-1} \Vert \\
\null & \leq & (c_1 \cdot \Vert D_2 \hat{f}_{h-1} \Vert + a_{h-2} \cdot \Vert D_1 \hat{f}_{h-1} \Vert + 1) \cdot \max_{1 \leq t \leq h} \Vert z_t \Vert.
\end{array}
\]
We set $c_2 := c_1 \cdot \Vert D_2 \hat{f}_{h-1} \Vert + a_{h-2} \cdot \Vert D_1 \hat{f}_{h-1} \Vert + 1$, and so we have proven
\begin{equation}\label{eq420}
\exists c_2 \in (0, + \infty), \Vert y_h^* \Vert \leq c_2 \cdot \max_{1 \leq t \leq h} \Vert z_t \Vert.
\end{equation}
We set $a_h := \max \{ a_{h-3}, c_1, c_2 \}$, and from (\ref{eq413}), (\ref{eq417}) and (\ref{eq420}) we have proven
\begin{equation}\label{eq421}
\exists a_h \in (0, + \infty), \max \{ \max_{1 \leq t \leq h} \Vert y_t^* \Vert, \max_{0 \leq t \leq h} \Vert v_t^* \Vert \} \leq a_h \cdot \max_{1 \leq t \leq h} \Vert z_t \Vert.
\end{equation}
Now we show that the last equation is satisfied by $y_h^*$ and $v_h^*$. Using (\ref{eq418}) and (\ref{eq416}), we obtain
\[
\begin{array}{l}
D_1 \hat{f}h y_h^* + D_2 \hat{f}_h v_h^*\\
=  D_1 \hat{f}_T(D_2 \hat{f}_{h-1} v_{h-1}^* + D_1 \hat{f}_{h-1} y_{h-1}^* - z_{h-1}) +  D_2 \hat{f}_h v_h^*\\
=  ( D_1 \hat{f}_h \circ (D_2 \hat{f}_{h-1} v_{h-1}^* + D_2 \hat{f}_h v_h^*)  + D_1 \hat{f}_h \circ D_1 \hat{f}_{h-1} y_{h-1}^* - D_1 \hat{f}_h  z_{h-1}\\
= (z_h + D_1 \hat{f}_hz_{h-1} - D_1 \hat{f}_h \circ D_1 \hat{f}_{h-1} y_{h-1}^*) 
 + D_1 \hat{f}_h \circ D_1 \hat{f}_{h-1} y_{h-1}^* - D_1 \hat{f}_h  z_{h-1}\\
=  z_h.
\end{array}
\]
We have proven that
\begin{equation}\label{eq422}
D_1 \hat{f}_h y_h^* + D_2 \hat{f}_h v_h^* = z_h.
\end{equation}
From (\ref{eq413}), (\ref{eq419}), (\ref{eq421}) and (\ref{eq422}) we have proven the following assertion
\[
\left\{
\begin{array}{l}
\exists a_h \in (0, + \infty), \forall (z_t)_{0 \leq t \leq h} \in Im Dg_h({\bf \hat{x}}^h, {\bf \hat{u}}^h ),\\
\exists (y_t^*)_{1 \leq t \leq h} \in X^h, \exists (v_t^*)_{0 \leq t \leq h} \in U^{h+1}, \; {\rm s.t.}\\
- y_1^* + D_2 \hat{f}_0 v_0^* = z_0, \;
\forall t \in \{1,...,h-1 \},\;\;  -y_{t+1} + D_1 \hat{f}_t y_t^* + D_2 \hat{f}_t v_t^* = z_t,\\
D_1 \hat{f}_h y_h^* + D_2 \hat{f}_h v_h^* = z_h, \; \; {\rm and}\\
 \max \{ \max_{1 \leq t \leq h} \Vert y_t^* \Vert, \max_{0 \leq t \leq h} \Vert v_t^* \Vert \} \leq a_h \cdot \max_{1 \leq t \leq h} \Vert z_t \Vert.
\end{array}
\right.
\]
This last assertion is equivalent to the following one
\[
\left\{
\begin{array}{l}
\exists a_h \in (0, + \infty), \forall {\bf z}^h = (z_t)_{0 \leq t \leq h} \in ImDg^h({\bf \hat{x}}^h, {\bf \hat{u}}^h ),\\
\exists {\bf y}^{h,*} = (y_t^*)_{1 \leq t \leq h} \in X^h, \exists {\bf v}^{h,*} = (v_t^*)_{0 \leq t \leq h} \in U^{h+1}, \; {\rm s.t.}\\
Dg^h({\bf \hat{x}}^h, {\bf \hat{u}}^h )( {\bf y}^h {\bf v}^{h,*}) = {\bf z}^h \;\; {\rm and} \;\;
\Vert ({\bf y}^{h,*}, {\bf v}^{h,*}) \Vert \leq a_h \cdot \Vert {\bf z}^h \Vert.
\end{array}
\right.
\]
Now using Proposition \ref{prop31} on the operator $Dg^h({\bf \hat{x}}^h, {\bf \hat{u}}^h )$, the previous assertion permits us to assert that 
$Im Dg^h({\bf \hat{x}}^h, {\bf \hat{u}}^h )$ is closed in $X^{h+1}$, and the proof of (i) is complete.
\vskip2mm
\noindent
{\bf (ii)} We arbitrarily fix ${\bf z}^h = (z_1,...,z_h) \in X^h$.  Since $D \hat{f}_h$ is surjective, there exists $y_h^{\#} \in X$ and $v_h^{\#} \in U$ such that $D \hat{f}_h(y_h^{\#}, v_h^{\#}) = z_h$. Since $D \hat{f}_{h-1}$ is surjective, there exists $y_{h-1}^{\#} \in X$ and $v_{h-1}^{\#} \in U$ such that $D \hat{f}_{h-1}(y_{h-1}^{\#}, v_{h-1}^{\#}) = z_{h-1} + y_h^{\#}$. We iterate this bachward reasoning until $t=2$ to obtain
\begin{equation}\label{eq423}
\left.
\begin{array}{l}
\forall t \in \{2,...,h \}, \exists (y_t^{\#}, v_t^{\#}) \in X \times U \;\; {\rm s.t.} \;\;  D \hat{f}_h(y_h^{\#},v_h^{\#}) = z_h\\
{\rm and} \;\; \forall t \in {2,...,h-1}, -y_t^{\#} + D \hat{f}_t(y_t^{\#}, v_t^{\#}) = z_t.
\end{array}
\right\}
\end{equation}
Now we introduce the linear continuous operator $M \in {\mathfrak L}(U \times U, X)$ by setting $M(v_0,, v_1) ;= D_1 \hat{f}_1 \circ D_2 \hat{f}_0 v_0 + D_2 \hat{f}_1 v_1$. From (\ref{eq46}) we have $Im M = X$ i.e. $M$ is surjective. Therefore we obtain
\begin{equation}\label{eq424}
\exists (v_0^{\#}, v_1^{\#}) \in U \times U \;\; {\rm s.t.} \;\; D_1 \hat{f}_1 \circ D_2 \hat{f}_0 v_0^{\#} + D_2 \hat{f}_1 v_1^{\#} = 
z_1 + y_2^{\#} + D_1 \hat{f}_1 z_0.
\end{equation}
We set $y_1^{\#} := D_2 \hat{f}_0 v_0^{\#} - z_0$. Hence we obtain
\begin{equation}\label{eq425}
- y_1^{\#} + D_2 \hat{f}_0 v_0^{\#} = z_0.
\end{equation}
Using (\ref{eq424}) and (\ref{eq425}), we calculate
\[
\begin{array}{l}
- y_2^{\#} + D_1 \hat{f}_1 y_1^{\#}  + D_2 \hat{f}_1 v_1^{\#} = - y_2^{\#} + D_1 \hat{f}_1 (D_2 \hat{f}_0 v_0^{\#} - z_0)  + D_2 \hat{f}_1 v_1^{\#}\\
=  - y_2^{\#}+ (D_1 \hat{f}_1 \circ D_2 \hat{f}_0 v_0^{\#} + D_2 \hat{f}_1 v_1^{\#}) - D_1 \hat{f}_1 z_0\\
= - y_2^{\#}+ (z_1  + y_2^{\#} + D_1 \hat{f}_1 z_0) -  D_1 \hat{f}_1 z_0 = z_1.
\end{array}
\]
We have proven
\begin{equation}\label{eq426}
- y_2^{\#} + D_1 \hat{f}_1 y_1^{\#}  + D_2 \hat{f}_1 v_1^{\#} = z_1.
\end{equation}
From (\ref{eq423}), (\ref{eq425}) and (\ref{eq426}) we have proven 
\begin{equation}\label{eq427}
\left.
\begin{array}{l}
\forall (z_t)_{0 \leq t \leq h} \in X^{h+1}, \exists (y_t^{\#})_{1 \leq t \leq h} \in X^h, \exists (v_t^{\#})_{0 \leq t \leq h}\in U^{h+1} \;\; {\rm s.t.}\\
-y_1^{\#} + D_2 \hat{f}_0 v_0^{\#} = z_0, \forall t \in \{ 1, ..., h-1 \} \;\; -y_{t+1}^{\#} + D \hat{f} 
(y_t^{\#}, v_t^{\#}) = z_t\\
{\rm and} \;\; D \hat{f}_h (y_h^{\#}, v_h^{\#}) = z_h.
\end{array}
\right\}
\end{equation}
This assertion is equivalent to
$$\forall {\bf z}^h \in X^{h+1}, \exists {\bf y}^{h, \#} \in X^h, \exists {\bf v}^{h, \#} \in U^{h+1} \; {\rm s.t.} \; Dg^h({\bf x}^h, {\bf u}^h() {\bf y}^{h, \#},{\bf v}^{h, \#}) = {\bf z}^h$$
which means that $Dg^h({\bf x}^h, {\bf u}^h)$ is surjective. 
\end{proof}
\vskip2mm
\begin{lemma}\label{lem43}
Let $(\hat{x}_t)_{t \in \N}, (\hat{u}_t)_{t \in \N})$ be an optimal solution of one of the problems $({\bf P}_i(\sigma))$, $i \in \{1,2,3 \}$. 
Under (H1), (H2), (H3), (\ref{eq43}) and (\ref{eq44}), for all $h \in \N_*$, there exists $\lambda^h_0 \in \R$ and $(p_{t+1}^h)_{0 \leq t \leq h} \in (X^*)^{h+1}$ such that the following assertions hold.
\begin{enumerate}
\item[(a)] $\lambda^h_0$ and $(p_{t+1}^h)_{0 \leq t \leq h}$ are not simultaneously equal to zero.
\item[(b)] $\lambda_0^h \geq 0$.
\item[(c)] $p_t^h = p_{t+1}^h \circ D_1 f_t(\hat{x}_t, \hat{u}_t) + \lambda_0^h D_1 \phi_t(\hat{x}_t, \hat{u}_t)$ for all $t \in \N_*$.
\item[(d)] $\langle \lambda_0^h D_2\phi_t(\hat{x}_t, \hat{u}_t) + p_{t+1}^h \circ D_2 f_t(\hat{x}_t, \hat{u}_t), u_t - \hat{u}_t \rangle \leq 0$ \;\; for all  $ t \in \{0,...,h \}$, for all $u_t \in U_t$.
\end{enumerate}
Moreover, for all $h \geq 2$, if in addition we assume (H4), (H5) and (H6) fulfilled, the following assertions hold.
\begin{enumerate}
\item[(e)] For all $t \in \{1,...,h+1 \}$, there exists $a_t, b_t \in \R_+$ such that, for all $s \in \{1,...,h \}$, $\Vert p_t^h \Vert \leq a_t \lambda_0^h + b_t \Vert p_s^h \Vert$.
\item[(f)] For all $ t \in \{1,...,h \}$, $(\lambda_0^h, p_t^h) \neq (0,0)$.
\item[(g)] For all $ t \in \{1,...,h \}$, for all $z \in A_t := D_2 f_{t-1}(\hat{x}_{t-1}, \hat{u}_{t-1})(T_{U_{t-1}}(\hat{u}_{t-1}))$, there exists $c_z \in \R$ such that $p_t^h(z) \leq c_z \lambda_0^h$ for all $h \geq t$.
\end{enumerate}
\end{lemma}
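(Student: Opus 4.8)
The plan is to apply a Multiplier Rule (a Lagrange/Karush-Kuhn-Tucker theorem in Banach spaces) to the finite-horizon problem $({\bf F}_h(\sigma))$, whose optimality is guaranteed by Lemma \ref{lem41}, and then to extract the quantitative consequences (e)--(g) from the closedness/surjectivity properties established in Lemma \ref{lem42}. First I would cast $({\bf F}_h(\sigma))$ as maximizing $J_h$ subject to the equality constraint $g^h({\bf x}^h,{\bf u}^h)=0$ and the convex constraint ${\bf u}^h \in \prod_{t=0}^h U_t$. Under (H3) the data are $C^1$, and under (\ref{eq43})--(\ref{eq44}) part (i) of Lemma \ref{lem42} gives that $Im\,Dg^h(\hat{\bf x}^h,\hat{\bf u}^h)$ is closed. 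A Multiplier Rule for problems with a closed-range equality constraint and a convex inequality/set constraint then yields a nonzero multiplier $(\lambda_0^h,(p_{t+1}^h)_{0\le t\le h})\in\R\times(X^*)^{h+1}$ with $\lambda_0^h\ge 0$, giving (a) and (b). Writing out the stationarity of the Lagrangian $\lambda_0^h J_h + \sum_{t=0}^h \langle p_{t+1}^h, g_t^h\rangle$ with respect to each $x_t$ produces the adjoint (costate) recursion (c), while stationarity with respect to each $u_t$ on the convex set $U_t$, using the tangent cone $T_{U_t}(\hat u_t)$, produces the variational inequality (d). These four assertions are the standard output of the reduction method and need only the closedness hypotheses.

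For the second block I would now assume (H4), (H5), (H6) in their operator form (\ref{eq45}), (\ref{eq46}), so that by part (ii) of Lemma \ref{lem42} the constraint differential $Dg^h$ is \emph{surjective}. Assertion (e) is the key estimate: I expect to prove the bound $\Vert p_t^h\Vert \le a_t\lambda_0^h + b_t\Vert p_s^h\Vert$ by iterating the adjoint recursion (c). Running (c) forward/backward expresses $p_t^h$ in terms of $p_s^h$ via a product of the operators $D_1\hat f_\tau$ plus a $\lambda_0^h$-weighted sum of the $D_1\phi_\tau$ terms; taking norms and collecting the (finitely many, $h$-dependent but fixed) operator norms as the constants $a_t,b_t$ gives the claim. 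Assertion (f) then follows from (a) together with (e): if $(\lambda_0^h,p_t^h)=(0,0)$ for some $t$, then (e) forces every $p_s^h=0$ as well, contradicting nonsimultaneous vanishing (a).

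The assertion I expect to be the main obstacle is (g), the uniform-in-$h$ sign-and-bound condition $p_t^h(z)\le c_z\lambda_0^h$ for $z\in A_t = D_2\hat f_{t-1}(T_{U_{t-1}}(\hat u_{t-1}))$. Here the difficulty is that the constant $c_z$ must be independent of $h$, whereas the multipliers and the constants in (e) genuinely depend on $h$. The plan is to combine the variational inequality (d) at date $t-1$ with the adjoint recursion (c) at date $t$: for $u_{t-1}\in U_{t-1}$, (d) gives $\langle \lambda_0^h D_2\phi_{t-1} + p_t^h\circ D_2\hat f_{t-1}, u_{t-1}-\hat u_{t-1}\rangle\le 0$, so for any $w\in\R_+(U_{t-1}-\hat u_{t-1})$ one has $p_t^h(D_2\hat f_{t-1} w)\le -\lambda_0^h\langle D_2\phi_{t-1}, w\rangle$. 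Passing to the closure defining $T_{U_{t-1}}(\hat u_{t-1})$ and writing $z=D_2\hat f_{t-1}w$, the right-hand side is $\lambda_0^h$ times a quantity $c_z:=-\langle D_2\phi_{t-1}(\hat x_{t-1},\hat u_{t-1}), w\rangle$ that depends only on $z$ (through a chosen $w$) and not on $h$, which is exactly the form required. The care needed is to verify that $D_2\phi_{t-1}$ is well defined and finite under (H3) and that the passage to the closure preserves the inequality, so that the same $c_z$ serves for all $h\ge t$; this uniformity is what later lets Proposition \ref{prop32} be applied in the diagonal-extraction argument of the main theorem.
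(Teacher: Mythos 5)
Your plan for (a)--(d) follows the paper's route exactly: Lemma \ref{lem41} reduces to the finite-horizon problem, Lemma \ref{lem42}(i) under (\ref{eq43})--(\ref{eq44}) gives the closedness of $Im\,Dg^h(\hat{\bf x}^h,\hat{\bf u}^h)$ needed to invoke a Banach-space multiplier rule (the paper cites Jahn and Lemma 4.5 of \cite{BB2}), and stationarity in $x_t$ and in $u_t$ yields (c) and (d). Your argument for (g) is also the intended one: apply (d) at date $t-1$, extend by positive homogeneity to $\R_+(U_{t-1}-\hat u_{t-1})$ and by continuity to its closure, and attach to each $z\in A_t$ a fixed preimage $w$ with $c_z:=-\langle D_2\phi_{t-1}(\hat x_{t-1},\hat u_{t-1}),w\rangle$, which is indeed independent of $h$.

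The gap is in (e). The recursion (c) reads $p_t^h=p_{t+1}^h\circ D_1f_t(\hat x_t,\hat u_t)+\lambda_0^h D_1\phi_t(\hat x_t,\hat u_t)$, so iterating it only bounds $\Vert p_t^h\Vert$ by $\Vert p_s^h\Vert$ (plus $\lambda_0^h$-terms) when $s\ge t$. For $s<t$ you must run the recursion in the opposite direction, i.e. estimate $\Vert p_{t+1}^h\Vert$ from $\Vert p_t^h\Vert$, and this cannot be extracted from (c) alone: it would require a bounded right inverse of $D_1f_t(\hat x_t,\hat u_t)$, which is not assumed. This is exactly where (H4) enters, and not through the surjectivity of $Dg^h$ (Lemma \ref{lem42}(ii)) as you suggest. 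The paper relies (via Lemma 4.7 of \cite{BB2}) on the consequence $0\in Int[Df_t(\hat x_t,\hat u_t)(X\times T_{U_t}(\hat u_t))\cap B_{X\times U}]$ of (H4): every $v$ in a small ball of $X$ can be written $v=D_1f_t(\hat x_t,\hat u_t)y+D_2f_t(\hat x_t,\hat u_t)w$ with $(y,w)\in X\times T_{U_t}(\hat u_t)$ of norm at most $1$; then (c) controls $p_{t+1}^h(D_1f_t(\hat x_t,\hat u_t)y)=(p_t^h-\lambda_0^h D_1\phi_t(\hat x_t,\hat u_t))(y)$, the variational inequality (d) gives the one-sided bound $p_{t+1}^h(D_2f_t(\hat x_t,\hat u_t)w)\le-\lambda_0^h\langle D_2\phi_t(\hat x_t,\hat u_t),w\rangle$, and taking the supremum of $p_{t+1}^h(v)$ over the ball (using $\lambda_0^h\ge 0$) converts these one-sided estimates into $\Vert p_{t+1}^h\Vert\le a\lambda_0^h+b\Vert p_t^h\Vert$. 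Without this step (e) is not established, and consequently neither is your deduction of (f) from (a) and (e).
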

\begin{proof}
Let $h \in \N_*$. Using Lemma \ref{lem41}, (\ref{eq41}) and (\ref{eq42}), we know that $({\bf \hat{x}}^h, {\bf \hat{u}}^h)$ (where ${\bf \hat{x}}^h = (x_1^h,...,x_h^h)$ and ${\bf \hat{u}}^h = (u_0^h,...,u_h^h)$), is an optimal solution of the following maximization problem,  
\[
\left\{
\begin{array}{cl}
{\rm Maximize} & J_h({\bf x}^h, {\bf u}^h)\\
{\rm when} & ({\bf x}^h, {\bf u}^h) \in (\prod_{t=1}^h X_t) \times (\prod_{t=0}^h U_t),\\
\null & g^h({\bf x}^h, {\bf u}^h) = 0.
\end{array}
\right.
\]
From (H3) we know that $J_h$ is Fr\'echet differentiable at $({\bf \hat{x}}^h, {\bf \hat{u}}^h)$ and $g^h$ is Fr\'echet continuously differentiable at $({\bf \hat{x}}^h, {\bf \hat{u}}^h)$. From (\ref{eq43}), (\ref{eq44}) and Lemma \ref{lem42} we know that $Im Dg^h({\bf \hat{x}}^h, {\bf \hat{u}}^h)$ is closed in $X^{h+1}$. Now using the multiplier rule which is given in \cite{Ja} (Theorem 3.5 p. 106--111 and Theorem 5.6 p. 118) and explicitely written in \cite{BB2} (Theorem 4.4), and proceeding as in the proof of Lemma 4.5 of \cite{BB2}, we obtain the assertions (a), (b), (c), (d).
\vskip1mm
The proof of assertions (e), (f), (g) is given by Lemma 4.7 of \cite{BB2}. The proof of this Lemma 4.7 uses the condition $0 \in Int[Df(\hat{x}_t, \hat{u}_t)(X \times T_{U_t}(\hat{u}_t)) \cap B_{X \times U}]$ where $B_{X \times U}$ is the closed unit ball of $X \times U$. It suffices to notice that our assumption (H4) implies this condition.
\end{proof}
\begin{remark}\label{rem44}
In Lemma 4.5 of \cite{BB2} the finiteness of the codimension of $ImD_2f(\hat{x}_t, \hat{u}_t)$ is useful to ensure the closedness of $ImDg^h({\bf \hat{x}}^h, {\bf \hat{u}}^h)$. Here we can avoid this assumption of finiteness thanks the recursive assumptions.
\end{remark}
The following proposition is used in the proof of the main result.
\begin{proposition}\label{prop45}
Let $(\hat{x}_t)_{t \in \N}, (\hat{u}_t)_{t \in \N})$ be an optimal solution of one of the problems $({\bf P}_i(\sigma))$, $i \in \{1,2,3 \}$. 
Under (H1-H6) we introduce
$$Z_0 := D_2f_0(\sigma, \hat{u}_0)(T_{U_0}(\hat{u}_0)) \;\;\; {\rm and} \;\;\; Z_1 := D_2f_1(\hat{x}_1, \hat{u}_1)(T_{U_1}(\hat{u}_1)).$$
Then, for all $h \in \N$, $h \geq 2$, there exist $\lambda_0^h \in \R$ and  $(p_{t+1}^h)_{0 \leq t \leq h} \in (X^*)^{h+1}$ such that the following assertions hold.
\begin{enumerate}
\item[(1)] $\lambda^h_0 \geq 0$.
\item[(2)] $p_t^h = p_{t+1}^h \circ D_1 f_t(\hat{x}_t, \hat{u}_t) + \lambda_0^h D_1 \phi_t(\hat{x}_t, \hat{u}_t)$ for all $t \in \N_*$.
\item[(3)] $\langle \lambda_0^h D_2\phi_t(\hat{x}_t, \hat{u}_t) + p_{t+1}^h \circ D_2 f_t(\hat{x}_t, \hat{u}_t), u_t - \hat{u}_t \rangle \leq 0$ \;\; for all  $ t \in \{0,...,h \}$, for all $u_t \in U_t$.
\item[(4)] For all $t \in \{1,...,h+1 \}$, there exists $a_t, b_t \in \R_+$ such that, for all $s \in \{1,...,h \}$, $\Vert p_t^h \Vert \leq a_t \lambda_0^h + b_t \Vert p_s^h \Vert$.
\item[(5)]  $(\lambda_0^h, {p_1^h}_{\mid Z_0}, {p_2^h}_{\mid Z_1}) \neq (0,0,0)$.
\item[(6)] For all $z_0 \in Z_0$, for all $z_1 \in Z_1$, there exists $c_{z_0,z_1} \in \R$ such that, \\
for all $h \geq 2$, $p_1^h(z_0) + p_2^h(z_1) \leq c_{z_0,z_1} \lambda_0^h.$
\item[(7)] For all $v \in X$ there exists $(z_0,z_1) \in Z_0 \times Z_1$ such that\\
$p_2^h(v) = p_1^h(z_0) + p_2^h(z_1) - \lambda_0^h D_1 \phi_1(\hat{x}_1, \hat{u}_1)(z_0)$ for all $h \geq 2$.
\end{enumerate}
\end{proposition}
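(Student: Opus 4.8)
The plan is to take the multipliers $(\lambda_0^h,(p_{t+1}^h)_{0\le t\le h})$ furnished by Lemma \ref{lem43} (which applies here since (H1)--(H6) are in force and $h\ge 2$) and to read off the seven assertions directly from conclusions (b)--(g) of that lemma. First I would record the two identifications that make the bookkeeping transparent: since $\hat{x}_0=\sigma$, the sets introduced in the statement coincide with the sets $A_t$ of Lemma \ref{lem43}(g), namely $Z_0=A_1=D_2f_0(\sigma,\hat{u}_0)(T_{U_0}(\hat{u}_0))$ and $Z_1=A_2=D_2f_1(\hat{x}_1,\hat{u}_1)(T_{U_1}(\hat{u}_1))$. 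With this in hand, assertions (1), (2), (3), (4) are literally (b), (c), (d), (e) of Lemma \ref{lem43}, so nothing remains to be done for them; the substance lies in (5), (6), (7).

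For (6) I would invoke Lemma \ref{lem43}(g) twice. Applied with $t=1$ it gives, for each $z_0\in Z_0=A_1$, a constant $c_{z_0}$ with $p_1^h(z_0)\le c_{z_0}\lambda_0^h$ for all $h\ge 1$; applied with $t=2$ it gives, for each $z_1\in Z_1=A_2$, a constant $c_{z_1}$ with $p_2^h(z_1)\le c_{z_1}\lambda_0^h$ for all $h\ge 2$. Summing these and setting $c_{z_0,z_1}:=c_{z_0}+c_{z_1}$ yields (6) for every $h\ge 2$.

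Assertion (7) is where the recursive hypothesis does its work. The costate equation (c) of Lemma \ref{lem43} at $t=1$ reads $p_1^h=p_2^h\circ D_1f_1(\hat{x}_1,\hat{u}_1)+\lambda_0^h D_1\phi_1(\hat{x}_1,\hat{u}_1)$, so for any $z_0\in Z_0$ and $z_1\in Z_1$
\[
p_1^h(z_0)+p_2^h(z_1)-\lambda_0^h D_1\phi_1(\hat{x}_1,\hat{u}_1)(z_0)=p_2^h\bigl(D_1f_1(\hat{x}_1,\hat{u}_1)z_0+z_1\bigr).
\]
Now (H5), rewritten through the identifications above and $\hat{x}_0=\sigma$, is exactly the surjectivity $D_1f_1(\hat{x}_1,\hat{u}_1)(Z_0)+Z_1=X$; therefore any prescribed $v\in X$ can be represented as $v=D_1f_1(\hat{x}_1,\hat{u}_1)z_0+z_1$ with $(z_0,z_1)\in Z_0\times Z_1$, and for this choice the display gives $p_2^h(v)=p_1^h(z_0)+p_2^h(z_1)-\lambda_0^h D_1\phi_1(\hat{x}_1,\hat{u}_1)(z_0)$ for all $h\ge 2$, which is (7).

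Finally, (5) follows from (7) together with the nontriviality (f). Arguing by contradiction, suppose $\lambda_0^h=0$, ${p_1^h}_{\mid Z_0}=0$ and ${p_2^h}_{\mid Z_1}=0$. Then the formula in (7) forces $p_2^h(v)=0$ for every $v\in X$, i.e. $p_2^h=0$; combined with $\lambda_0^h=0$ this contradicts Lemma \ref{lem43}(f) at $t=2$ (legitimate because $h\ge 2$), which asserts $(\lambda_0^h,p_2^h)\ne(0,0)$. I expect this last step---and more precisely the surjectivity supplied by (H5) that powers (7)---to be the crux: it is the device that replaces the finite-codimension arguments of \cite{BB2}, since it guarantees that the costate $p_2^h$ cannot vanish on the pair of tangent-cone images $Z_0,Z_1$ (together with $\lambda_0^h$) without vanishing outright.
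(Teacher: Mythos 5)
Your proposal is correct and follows essentially the same route as the paper: assertions (1)--(4) and (6) are read off from Lemma \ref{lem43}, (7) comes from the costate equation at $t=1$ combined with the surjectivity in (H5) (after identifying $Z_0=A_1$, $Z_1=A_2$), and (5) is a contraposition resting on Lemma \ref{lem43}(f). The only (harmless) cosmetic difference is that you derive (5) from (7) and (f) at $t=2$, whereas the paper argues (5) directly from (H5) and (f) at $t=1$.
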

\begin{proof}
\underline{Proof of (1-4)} Note that conditions (\ref{eq43}) and (\ref{eq44}) are consequences of (H4). We use $\lambda_0^h$ and  $(p_{t+1}^h)_{0 \leq t \leq h}$ which are provided by Lemma \ref{lem43}. Hence conclusions (1), (2) and (3) are given by Lemma \ref{lem43}. The conclusion (4) is the conclusion (e) of Lemma \ref{lem43}.\\
\underline{Proof of (5)} From the conclusion (f) of Lemma \ref{lem43}, we know that $(\lambda_0^h, p_1^h) \neq (0,0)$. We want to prove that [$(\lambda_0^h, {p_1^h)}\neq (0,0)]$ implies (5). To do that we proceed by contraposition; we assume that $[\lambda_0^h = 0, {p_1^h}_{\mid Z_0} = 0, {p_2^h}_{\mid Z_1} = 0]$ and we want to prove that $[\lambda_0^h = 0 , p_1^h = 0]$. Since $\lambda_0^h = 0$, using the conclusion (2) we obtain $p_1^h = p_2^h \circ D_1f_1(\hat {x}_1, \hat{u}_1)$ which implies
$$p_1^h \circ D_2f_0(\sigma, \hat{u}_0)(T_{U_0}(\hat{u}_0)) = p_2^h \circ D_1f_1(\hat {x}_1, \hat{u}_1) \circ D_2f_0(\sigma, \hat{u}_0)(T_{U_0}(\hat{u}_0)),$$
and since ${p_1^h}_{\mid Z_0} = 0$, we obtain $p_2^h \circ D_1f_1(\hat {x}_1, \hat{u}_1) \circ D_2f_0(\sigma, \hat{u}_0)(T_{U_0}(\hat{u}_0)) = 0$, and since ${p_2^h}_{\mid Z_1} = 0$, using (H5), we obtain $p_2^h = 0$ (on $X$ all over). Hence $p_1^h = p_2^h \circ D_1f_1(\hat {x}_1, \hat{u}_1) = 0$. The proof of (5) is complete.\\
\underline{Proof of (6)} Let $z_0 \in Z_0$ , $z_1 \in Z_1$. Using conclusion (g) of Lemma \ref{lem43}, we obtain that there exists $c_{z_0}^0 \in \R$ such that $p_1^h(z_0) \leq c^0_{z_0} \lambda^h_0$ for all $h \geq 1$, and that there exists  $c_{z_1}^1 \in \R$ such that $p_2^h(z_1) \leq c^1_{z_1} \lambda^h_0$  for all $h \geq 2$. Setting $c_{z_0,z_1} := c_{z_0}^0 + c_{z_1}^1$ we obtain the announced conclusion.\\
\underline{Proof of (7)} From (H5), for all $v \in X$, there exists $\zeta_0 \in T_{U_0}(\hat{u}_0)$ and $\zeta_1 \in T_{U_1}(\hat{u}_1)$ such that 
$$v = D_1f_1(\hat{x}_1, \hat{u}_1) \circ D_2f_0(\sigma, \hat{u}_0)(\zeta_0) + D_2 f_1(\hat{x}_1, \hat{u}_1)(\zeta_1).$$
We set $z_0 := D_2f_0(\sigma, \hat{u}_0)(\zeta_0) \in Z_0$ and $z_1 := D_2f_1(\hat{x}_1, \hat{u}_1)(\zeta_1) \in Z_1$, hence we have 
\begin{equation}\label{eq428}
v =  D_1f_1(\hat{x}_1, \hat{u}_1)(z_0) + z_1.
\end{equation}
From conclusion (2) we deduce
$$p_1^h \circ D_2f_0(\sigma, \hat{u}_0) = p_2^h \circ D_1f_1(\hat{x}_1, \hat{u}_1) \circ D_2f_0(\sigma, \hat{u}_0) + \lambda_0^h D_1 \phi_1(\hat{x}_1, \hat{u}_1) \circ D_2f_0(\sigma, \hat{u}_0).$$
Applying this last equation to $\zeta_0$ we obtain
$$p_1^h(z_0) = p_2^h \circ D_1f_1(\hat{x}_1, \hat{u}_1)(z_0)   + \lambda_0^h D_1 \phi_1(\hat{x}_1, \hat{u}_1)(z_0).$$
Adding $p_2^h(z_1)$ to this equality we obtain
$$p_1^h(z_0) +p_2^h(z_1)  = p_2^h \circ D_1f_1(\hat{x}_1, \hat{u}_1)(z_0) +  p_2^h(z_1)  + \lambda_0^h D_1 \phi_1(\hat{x}_1, \hat{u}_1)(z_0).$$ 
Using (\ref{eq428}) we have $p_1^h(z_0) +p_2^h(z_1)  = p_2^h(v) +  \lambda_0^h D_1 \phi_1(\hat{x}_1, \hat{u}_1)(z_0)$ which implies the announced equality.
\end{proof}
\section{Proof of the main theorem}
Proposition \ref{prop45} provides sequences $(\lambda_0^h)_{h \geq 2}$, $(p_t^h)_{h \geq t}$ for all $t \in \N_*$. We set $q_1^h := p_1^h \circ D_2f_0(\sigma, \hat{u}_0) \in U^*$ and $
q_2^h := p_2^h \circ D_2f_1(\hat{x}_1, \hat{u}_1) \in U^*$ for all $h \geq 2$. From conclusion (5) of Proposition \ref{prop45} we obtain
$$(\lambda_0^h, {q_1^h}_{\mid T_{U_0}(\hat{u}_0)}, {q_2^h}_{\mid T_{U_1}(\hat{u}_1)}) \neq (0,0,0).$$
We introduce $\Sigma := \overline{{\rm aff}}(T_{U_0}(\hat{u}_0) \times T_{U_1}(\hat{u}_1))$ the closed affine hull of $T_{U_0}(\hat{u}_0) \times T_{U_1}(\hat{u}_1)$ which is a closed vector subspace since the tangent cones contain the origine. From the previous relation we can assert that $(\lambda_0^h, (q_1^h,q_2^h)_{\mid \Sigma}) \neq (0, (0,0))$. We introduce the number
$$\theta^h := \lambda_0^h + \Vert(q_1^h,q_2^h)_{\mid \Sigma} \Vert_{\Sigma^*} > 0.$$
Since the list of the multipliers of the problem in finite horizon is a cone, we can replace $\lambda_0^h$ by $\frac{1}{\theta^h} \lambda_0^h$ and the $p_t^h$ by $\frac{1}{\theta^h} p_t^h$ (without to change the writting), and so we can assume that the following property holds.
\begin{equation}\label{eq51}
\forall h \geq 2, \;\; \lambda_0^h + \Vert(q_1^h,q_2^h)_{\mid \Sigma} \Vert_{\Sigma^*}  = 1.
\end{equation}
Using the Banach-Alaoglu theorem, we can assert that there exist an increasing mapping $\varphi_1 : [2, + \infty)_ {\N} \rightarrow [2, + \infty)_{\N}$, $\lambda_0 \in \R$, $(q_1, q_2) \in \Sigma^*$ such
$$(\lambda_0^{\varphi_1(h)}, (q_1^{\varphi_1(h)}, q_2^{\varphi_1(h)})_{\mid \Sigma}) \overset{w^*}{\longrightarrow} (\lambda_0, (q_1,q_2)) \;\;{\rm when} \; \; h \rightarrow + \infty.$$
Now we want to establish that
\begin{equation}\label{eq52}
(\lambda_0, (q_1,q_2)) \neq (0,(0,0)).
\end{equation}
To do that we proceed by contradiction; we assume that $\lambda_0 = 0$ and $(q_1,q_2) = (0,0)$. 
From conclusion (6) of Proposition \ref{prop45} we deduce that, for all $\zeta_0 \in T_{U_0}(\hat{u}_0)$ and for all $\zeta_1 \in T_{U_1}(\hat{u}_1)$, there exixts $c_{\zeta_0, \zeta_1} \in \R$ such that $q_1^{\varphi_1(h)}(\zeta_0) + q_2^{\varphi_1(h)}(\zeta_1) \leq c_{\zeta_0, \zeta_1} \lambda_0^{\varphi_1(h)}$ for all $h \geq 2$. Hence we can use Proposition \ref{prop32} with $Y = \Sigma$, $K = T_{U_0}(\hat{u}_0) \times T_{U_1}(\hat{u}_1)$, $S = \Sigma$, $\rho_h = \lambda_0^{\varphi_1(h)}$, and $\pi_h = (q_1^{\varphi_1(h)}, q_2^{\varphi_1(h)})_{\mid \Sigma}$. Consequently we  obtain that $\lim_{h \rightarrow + \infty} \Vert (q_1^{\varphi_1(h)}, q_2^{\varphi_1(h)})_{\mid \Sigma} \Vert_{\Sigma ^*} = 0$. Since we also have $\lim_{h \rightarrow + \infty}  \lambda_0^{\varphi_1(h)} =0$, we obtain a contradiction with (\ref{eq51}). Hence (\ref{eq52}) is proven.
\vskip2mm
\noindent
From conclusion (7) of Proposition \ref{prop45} we have, for all $x \in X$, there exists $(\zeta_0, \zeta_1) \in \Sigma$ such that, for all $h \geq 2$, 
$$p_2^{\varphi_1(h)} (x) = (q_1^{\varphi_1(h)},q_2^{\varphi_1(h)})_{\mid \Sigma}(\zeta_0,\zeta_1) - \lambda_0^{\varphi_1(h)}D_1 \phi_1(\hat{x}_1, \hat{u}_1) \circ D_2f_0(\sigma, \hat{u}_0)(\zeta_0)$$
which permits to say that there exists $p_2 \in X^*$ such that $p_2^{\varphi_1(h)} \overset{w^*}{\rightarrow} p_2$ when $h \rightarrow + \infty$.
\vskip2mm
\noindent
From conclusion (2) of Proposition \ref{prop45} at $t=1$, we obtain that there exists $p_1 \in X^*$ such that $p_1^{\varphi_1(h)} \overset{w^*}{\rightarrow} p_1$ when $h \rightarrow + \infty$, and from (\ref{eq52}) we obtain 
\begin{equation}\label{eq53}
(\lambda_0, (p_1,p_2)) \neq (0, (0,0)).
\end{equation}
Since $(p_1^{\varphi_1(h)})_{h \geq 2}$ is weak-star convergent on $X$, using the Banach-Steinhaus theorem we can assert that the sequence $( \Vert p_1^{\varphi_1(h)} \Vert_{X^*})_{h \geq 2}$ is bounded. Since $(\lambda_0^{\varphi_1(h)})_{h \geq 2}$ is convergent in $\R$, it is bounded, and from conclusion (4) of Proposition \ref{prop45}, we deduce that the sequence $(p_t^{\varphi_1(h)})_{h \in [t, + \infty)_{\N}}$ is bounded for each $t \in \N_*$. Then we can use Proposition \ref{prop33} to ensure the existence of an increasing function $\varphi_2 : [2, + \infty)_{\N} \rightarrow [2, + \infty)_{\N}$ and a sequence $(p_t)_{t \in N_*} \in (X^*)^{\N_*}$ such that $p_t^{\varphi_1 \circ \varphi_2(h)} \overset{w^*}{\rightarrow} p_t$ when $h \rightarrow + \infty$ for all $t \in \N_*$, and we have also $\lambda_0^{\varphi_1 \circ \varphi_2(h)} \rightarrow \lambda_0$ when $h \rightarrow + \infty$. Hence we have built all the multipliers. The properties of these multipliers are obtained by taking limits from the properties of the $\lambda_0^h$ and the $p_t^h$. Their non triviality is proven by (\ref{eq53}). Hence the proof of the main result is complete.
\vskip2mm
\noindent

\bibliographystyle{amsplain}

\begin{thebibliography}{999}
%
\bibitem{AE} J.-P. Aubin and I. Ekeland, Applied nonlinear analysis, John Wiley \& Sons, New York, 1984.
%
\bibitem{BB1} M. Bachir and J. Blot \textit{Infinite Dimensional Infinite-horizon Pontryagin principles for discrete-time problems}, Set-Valued Var. Anal. {\bf 23} (2015) 43-54.
%
\bibitem{BB2} M. Bachir and J. Blot \textit{Infinite dimensional multipliers and Pontryagin principles for discrete-time problems}, Pure and Applied Functional Analysis, to appear.
%
\bibitem{BCi} J. Blot and P. Cieutat, \textit{Completness of sums of subspaces of bounded functions and applications}, Commun. Math. Anal. {\bf 19}(2) (2018), 43-61.

\bibitem{BCh} J. Blot and H. Chebbi, {\it Discrete time Pontryagin principle in infinite horizon}, J. Math. Anal. Appl. {\bf 246} (2000), 265-279.
%
\bibitem{BH} J. Blot and N. Hayek, Infinite-horizon optimal control in the discrete-time framework, Springer, New York, 2014.
%
\bibitem{CHL} D.A. Carlson, A.B. Haurie and A. Leizarowitz, Infinite horizon optimal control; deterministic and stochastic systems, Second revised edition, Springer, Berlin, 1991.
%
\bibitem{Cl} W.C. Clark, Mathematical bioeconomics; optimal management of renewable resources, second edition, John Wiley and Sons, Inc;, Hoboken, New Jersey, 2005.
%
\bibitem{Ja} J. Jahn, Introduction to the theory of nonlinear optimization, Third edition, Springer-Verlag, Berlin, 2007.
%
\bibitem{Mi} P. Michel, \textit{Some clarifications on the transversality condition}, Econometrica {\bf 58} (1990), 705-728.
%
\bibitem{SLP} N. L. Stokey, R.E. Lucas and E.C. Prescott, Recursive methods in economic dynamics, Harvard University Press, Cambridge, MA, 1989.
%
\bibitem{Zal} C. Z${\rm \breve{a}}$linescu, Convex analysis in general vector spaves, World Scientific Publishing Co., Pte, Ltd, Singapore, 2002.
%
\bibitem{Zas} A.J. Zaslavski, Turnpike properties in the calculus of variations and optimal Control, Springer Science+Business Media, New york, N.Y., 2006.
%
\end{thebibliography}

\end{document}